\documentclass[11pt]{article}
\usepackage{amsmath}
\usepackage{etoolbox}
\usepackage{amssymb,amsfonts,amsmath,amsthm}
\usepackage{epsfig}
\usepackage{amssymb,amsfonts,amsmath,amsthm,cite,verbatim}
\usepackage{xcolor}
\usepackage{tikz}
\usepackage{graphicx}

\usepackage{hyperref}
\usepackage{enumitem}
\usepackage[normalem]{ulem}
\newtheorem{theorem}{Theorem}[section]
\newtheorem{thm}[theorem]{Theorem}
\newtheorem{prop}[theorem]{Proposition}
\newtheorem{lem}[theorem]{Lemma}
\newtheorem{rem}[theorem]{Remark}

\newtheorem{cor}[theorem]{Corollary}

\makeatletter \@addtoreset{equation}{section}

\newcommand{\qbinom}[2]{\genfrac{[}{]}{0pt}{}{#1}{#2}}

\newcommand{\Mid}{\:|\:}  
\DeclareMathOperator*{\CT}{CT}

\begin{document}

\title{A generalization of Kadell's orthogonality ex-conjecture}
\author{
  {\vspace{0.2cm}}
  {Zihao Huang$^1$, Wenlong Jiang$^{2}\footnotemark[1]$, AND Yue Zhou$^3$}\\
  {\small $^{1,2,3}$ School of Mathematics and Statistics, }\\
  {\vspace{0.2cm}}
  {\small Hunan Research Center of the Basic Discipline for Analytical Mathematics, HNP-LAMA,}\\
  {\vspace{0.2cm}}
  {\small Central South University, Changsha 410083, China}\\
  { \small $^1$zihaohuang@csu.edu.cn\ \ \ $^2$jiangwenlong@csu.edu.cn\ \ \ $^3$zhouyue@csu.edu.cn}
 }

\date{\today}

\maketitle

\footnotetext[1]{*Corresponding author}

\begin{abstract}
In 2000, Kadell gave an orthogonality conjecture for a symmetric function generalization of the Zeilberger--Bressoud $q$-Dyson constant term identity. The non-zero part of Kadell's conjecture is a constant term identity indexed by a one-row weak composition $v$. This conjecture was first proved by K\'{a}rolyi, Lascoux and Warnaar in 2015. They further formulated a closed-form expression for the above constant term when all parts of the composition $v$ are distinct. In 2021, Zhou obtained a recursion for this constant term for an arbitrary composition $v$. In this paper, by categorizing the variables into two parts, we generalize Zhou's result.
\end{abstract}

{\small \emph{2020 Mathematics Subject Classification}. 05A30, 33D70, 05E05}

{\small \emph{Key words}. $q$-Dyson constant term identity, Kadell's orthogonality conjecture, Symmetric functions, $q$-Baker--Forrester conjecture.}

\section{Introduction}\label{sec-intr}

For a positive integer $n$, a sequence of nonnegative integers $a=(a_1, a_2,\ldots,a_n)$, and variables $x:=(x_1,x_2,\dots,x_n)$, define
\begin{equation}
D(x;a;n):=\prod_{1\leq i<j\leq n}
(x_i/x_j)_{a_i}(qx_j/x_i)_{a_j},
\end{equation}
where for a positive integer $k$, $(z)_k:=(z;q)_k=(1-z)(1-zq)\cdots(1-zq^{k-1})$ is the $q$-shifted factorial and $(z)_0:=1$. In 1975, Andrews \cite{andrews1975} conjectured that 
\begin{equation}\label{q-Dyson}
\CT_{x} D(x;a;n)=
\frac{(q)_{a_1+\cdots+a_n}}{(q)_{a_1}(q)_{a_2}\cdots(q)_{a_n}},
\end{equation}
where $\CT\limits_{x}$ denotes taking the constant term with respect to $x$. When $q\rightarrow 1^-$, the constant term identity~\eqref{q-Dyson} reduces to the Dyson constant term identity~\cite{dyson}.

Andrews' conjecture was first proved combinatorially in 1985 by Zeilberger and Bressoud \cite{zeil-bres1985}. Many other simpler proofs were given later, see~\cite{gess-xin2006,KN,cai}. Now the Andrews' ex-conjecture is usually referred to as the Zeilberger--Bressoud $q$-Dyson theorem or the $q$-Dyson constant term identity.

In the theory of Macdonald polynomials, the equal-parameter case of $D(x;a;n)$, i.e., $a_1=\cdots=a_n$, is regarded as the weight function of the constant term scalar product~\cite[section 6.9]{Mac95}. Therefore, the theory of Macdonald polynomials provides a satisfactory explanation for the equal-parameter case of the $q$-Dyson identity in terms of orthogonal polynomials. A natural and long-standing problem is how to give a similar explanation for the general $q$-Dyson identity \eqref{q-Dyson} in terms of orthogonal polynomials. An important step on this problem was achieved by Kadell~\cite{kadell} in 2000. He formulated a symmetric function generalization of the $q$-Dyson identity and gave an orthogonal conjecture. See~\eqref{kadellconj} below.

To describe the conjecture, we require some notation. For an integer sequence $v=(v_1,v_2\ldots,v_n)$, we write $|v|$ for the sum of its entries, i.e., $|v|=v_1+\cdots+v_n$. We also write $v^{+}$ for the sequence obtained from $v$ by reordering its entries in weakly decreasing order. We say $v$ is a (weak) composition, if all the entries of $v$ are nonnegative integers. A partition is a sequence $\lambda={(\lambda_1,\lambda_2,\dots)}$ of nonnegative integers such that
${\lambda_1\geq \lambda_2\geq \cdots}$ and
only finitely-many $\lambda_i$ are positive.
The length of a partition $\lambda$, denoted by $\ell(\lambda)$, is defined as the number of non-zero $\lambda_i$. Let $X=\{x_1,x_2,\dots\}$ be an alphabet of countably many variables. The $r$-th complete symmetric function $h_r(X)$ may be defined in terms of its generating function as
\begin{equation}\label{e-gfcomplete}
\sum_{r\geq 0} z^r h_r(X)=\prod_{i\geq 1}
\frac{1}{1-zx_i}.
\end{equation}
Furthermore, the complete symmetric function indexed by a composition (or partition) $v=(v_1,v_2,\dots,v_k)$ is defined as
\[
h_v:=h_{v_1}\cdots h_{v_k}.
\]
For a sequence of positive integers $a:=(a_1,a_2,\dots,a_n)$, let $X^{(a)}$ be the alphabet
\begin{align*}
    X^{(a)}:=\{& x_1,x_1 q,\ldots,x_1 q^{a_1-1},\ldots,x_{n},x_{n}q ,\ldots,x_{n}q^{a_{n}-1}\}.
\end{align*}
Define a generalized $q$-Dyson constant term
\begin{equation}\label{GDyson}
D_{v,\lambda}(a;n):=\CT_{x}
x^{-v}h_{\lambda}\big(X^{(a)}\big)
\prod_{1\leq i<j\leq n}
(x_i/x_j)_{a_i}(qx_j/x_i)_{a_j}.
\end{equation}
Here $v=(v_1,v_2,\dots,v_n)\in\mathbb{Z}^{n}$, $x^v$ denotes the monomial $x_1^{v_1}\cdots x_n^{v_n}$,
and $\lambda$ is a partition such that
$|\lambda|=|v|$. (Note that if $|\lambda| \neq |v|$ then
$D_{v,\lambda}(a;n)=0$ by the homogeneity).

For the constant term~\eqref{GDyson}, Kadell~\cite[Conjecture 4]{kadell} 
conjectured that
\begin{equation}\label{kadellconj}
D_{v,(r)}(a;n)=
\begin{cases}
\displaystyle
\cfrac{q^{\sum_{i=k+1}^n a_i}(1-q^{a_k})(q^{|a|+1})_{r-1}}{(q^{|a|-a_k+1})_r}
\prod_{i=1}^n\qbinom{a_i+\cdots+a_n}{a_i}
& \text{if $v=(0^{k-1},r,0^{n-k})$}, \\[6mm]
0 & \text{otherwise},
\end{cases}
\end{equation}
where $\qbinom{n}{k}=(q^{n-k+1})_k/(q)_k$ is a $q$-binomial coefficient for nonnegative integers $n$ and $k$. In fact, Kadell only considered $v=(r,0^{n-1})$ in his original conjecture. His conjecture was proved and extended to the above form by K\'{a}rolyi, Lascoux and Warnaar in \cite[Theorem 1.3]{KLW} using the multivariable Lagrange interpolation and the key polynomials. They further obtained a closed-form expression for $D_{v,v^{+}}(a;n)$ when $v$ is a weak composition such that $v_i\neq v_j$ for all $i\neq j$.
Subsequently, Cai~\cite{cai} gave an inductive proof of Kadell's conjecture. Finally, Zhou~\cite{Zhou-JCTA} obtained a recursion for $D_{v,v^+}(a;n)$ for arbitrary nonzero weak compositions $v$.

The main objective of this paper is to generalize Kadell's orthogonality problem by categorizing the variables into two parts. The way we categorize the variables into several parts is inspired by the $q$-Baker--Forrester conjecture \cite{Baker-Forrester2}. 
In 1998, based on the ground-state wave function of multi-component Calogero--Sutherland quantum many--body system, Baker and Forester conjectured a multi-component generalization of the $q$-Morris identity~\cite[Conjecture 2.2]{Baker-Forrester2}. It in turn gave a generalization of the $q$-Selberg integral. 
By the idea to categorize the variables in the Baker--Forrester conjecture, Jiang et al. \cite{JWZZ} considered a multi-component generalization of the $q$-Dyson constant term. Their two-part case can be equivalently written as 
\begin{equation}\label{dyson-two}
    D(a;n,n_0):=\underset{x}{\mathrm{CT}} \prod_{1\leq i<j\leq n}
\big(x_i/x_j\big)_{a_i-\chi(i\leq n_0)}
\big(qx_j/x_i\big)_{a_j-\chi(i\leq n_0)},
\end{equation}
where $a:=(a_1,a_2,\dots,a_n)$ is a sequence of positive integers and $n_0\leq n$ is a nonnegative integer. Let $X^{(a;n_0)}$ be the alphabet
\begin{align*}
    X^{(a;n_0)}:=\{& x_1,x_1 q,\ldots,x_1 q^{a_1-2},\ldots,x_{n_0},x_{n_0}q ,\ldots,x_{n_0}q^{a_{n_0}-2},\\
                & x_{n_0+1}, x_{n_0+1} q, \dots ,x_{n_0+1} q^{a_{n_0+1}-1},\ldots, x_n, x_n q,\ldots, x_n q^{a_n-1}\}.
\end{align*}
For $v\in\mathbb{Z}^n$ and a partition $\lambda$ such that $|v|=|\lambda|$, define
\begin{equation}\label{GDyson1}
D_{v,\lambda}(a;n,n_0):=\CT_{x}
x^{-v}h_{\lambda}\big(X^{(a;n_0)}\big)
\prod_{1\leq i<j\leq n}
(x_i/x_j)_{a_i-\chi(i\leq n_0)}(qx_j/x_i)_{a_j-\chi(i\leq n_0)}.
\end{equation}
It is clear that $D_{v,\lambda}(a;n,0)=D_{v,\lambda}(a;n)$. In this paper, we obtain vanishing and recursive properties for $D_{v,\lambda}(a;n,n_0)$. We provide the vanishing result first.
\begin{theorem}\label{Thm-van}
    Let $v \in \mathbb{Z}^{n}$ and $\lambda$ be a partition such that $|v|=|\lambda|$.  Then $D_{v,\lambda}(a;n,n_0)=0$, if there exists an integer $0<j<n$, such that
    \begin{equation}\label{e-thm-van}
        \sum_{k=1}^j v_{i_k}-p_I(n-n_0-j+p_I)<\sum_{k=1}^j \lambda_k 
    \end{equation}
    for every $j$-element subset $I=\{i_1,\ldots,i_j\}\subseteq \{1,\ldots,n\}$. Here $p_I:=\Mid \{ i_k : i_k\leq n_0, 1\leq k\leq j\} \Mid$.
\end{theorem}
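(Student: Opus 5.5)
We plan to adapt the Károlyi--Lascoux--Warnaar / Zhou strategy to the two-part setting. The approach is to expand $h_\lambda(X^{(a;n_0)})$ into monomials, reduce the constant term to a polynomial in $q^{a_i}$, and show that this polynomial vanishes at enough points to be identically zero.

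First, we expand $h_\lambda$ via the generating function \eqref{e-gfcomplete}. Each part $h_{\lambda_l}(X^{(a;n_0)})$ is a sum over compositions $u^{(l)}$ of $\lambda_l$ of products $x^{u^{(l)}}$ times $q$-factors. These factors are polynomial in $q^{a_i}$ (they are $q$-binomial type sums $\qbinom{u_i+a_i-1-\chi(i\le n_0)}{u_i}$). Then $D_{v,\lambda}(a;n,n_0)$ becomes a finite sum of Dyson-type constant terms
\[
\CT_x x^{-w}\prod_{i<j}(x_i/x_j)_{a_i-\chi(i\le n_0)}(qx_j/x_i)_{a_j-\chi(i\le n_0)},\qquad w=v-\textstyle\sum_l u^{(l)}.
\]
Following Gessel--Xin / Károlyi--Nagy, we regard everything, after fixing $a_1$ (or fixing all but one $a_i$), as a polynomial in $q^{a_i}$ of bounded degree. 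It therefore suffices to show vanishing at enough values of $a_i$, including "negative" specializations.

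The key step is to show that the Laurent polynomial $x^{-v}h_\lambda\prod(\cdots)$ has no constant term by a support argument, rather than by interpolation. Multiply by the Laurent polynomial and consider a subset $I$ of $j$ variables. Any monomial surviving in the product has its total $x_I$-degree bounded as follows. The factors between $I$ and its complement contribute to the exponent of $x_I$. The factors $(x_i/x_j)_{a_i-1}$ with $i\le n_0$ lose one degree compared to the ordinary case, and each such $i\in I$ paired with each complement index $>n_0$ gives a deficit. Counting these deficits produces the term $p_I(n-n_0-j+p_I)$. Here $n-n_0-(j-p_I)$ is the number of complement indices exceeding $n_0$, but the exact combinatorial form needs checking: it likely counts pairs in $I\times I^c$ with truncated exponents.

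The condition \eqref{e-thm-van} for every $I$ will then be contrasted with the fact that the monomials of $h_\lambda$ give the $x_I$-degree at most $\lambda_1+\dots+\lambda_j$. One cannot simply bound by degree, because the $q$-Dyson product has constant terms with nonzero $x_I$ degrees. So we follow Károlyi--Lascoux--Warnaar and use the multivariable Lagrange interpolation (Combinatorial Nullstellensatz) form. We write the constant term as
\[
\sum_{\sigma} \frac{F(\text{grid point})}{\prod(\cdots)},
\]
where the grid points are $x_i=q^{\alpha_i}$ with $\alpha$ determined by a permutation ordering and the shifted parameters $a_i-\chi(i\le n_0)$. The product vanishes at all grid points except those forming a "staircase" arrangement. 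At these surviving points, the alphabet $X^{(a;n_0)}$ collapses so that $h_\lambda$ evaluated there vanishes by a dimension/length argument.

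The main obstacle is this final step: identifying the surviving grid points in the two-part setting and showing $x^{-v}h_\lambda$ is killed there. The reduced exponents $a_i-1$ for $i\le n_0$ shift the staircase, and this shift must reproduce exactly the $p_I(n-n_0-j+p_I)$ correction. First, we would try the case $j=1$ to calibrate the shift, then do induction on $n$ by removing variables as in Cai's inductive proof.
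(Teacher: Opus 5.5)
Your proposal is not yet a proof. It names three possible routes and leaves the decisive step of each one open.

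\begin{itemize}
\item The polynomiality-in-$q^{a_i}$ route never gives a degree bound, the specialization points, or a reason the constant term vanishes at them.
\item You abandon the support count yourself, and rightly. Take $n=2$, $n_0=0$, $a=(1,1)$, $v=(1,1)$, $\lambda=(2)$, where the hypothesis holds with $j=1$. The constant term is the coefficient of $x_1x_2$ in $(x_1^2+x_1x_2+x_2^2)(1+q-x_1/x_2-qx_2/x_1)$, namely $(1+q)-q-1=0$. This is a cancellation among all three monomials of $h_2$, not a support obstruction.
\item The interpolation route rests on the claim that $h_\lambda(X^{(a;n_0)})$ vanishes at the surviving grid points ``by a dimension/length argument''. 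This is false. $h_\lambda$ evaluated on any nonempty alphabet of powers of $q$ is a nonzero Laurent polynomial in $q$ with nonnegative coefficients. Length-type vanishing holds for $e_\lambda$ or $s_\lambda$, not for $h_\lambda$. Any vanishing would have to come from cancellation across grid points, and you give no mechanism for that. The appearance of $p_I(n-n_0-j+p_I)$ is left as ``the main obstacle''.
\end{itemize}

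The missing idea is the one you mention only in your last sentence: Cai's splitting.
\begin{itemize}
\item Write $D_{v,\lambda}(a;n,n_0)=\CT_{x,w}x^{-v}w^{\lambda}F_{n,n_0}(a;x,w)$ using the generating function of the $h_r$.
\item Decompose $F_{n,n_0}$ into partial fractions in $w_1$; the poles are at $w_1=q^jx_i$.
\item Taking $\CT_{w_1}$ turns $w_1^{\lambda_1}$ into $(q^jx_i)^{\lambda_1}$. Each residue is $F_{n-1,n_0-\chi(i\le n_0)}$ in the remaining variables, times a power series in $x_i$.
\item For $i\le n_0$ this series carries the factor $\prod_{l>n_0}(-x_i/x_l)$. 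So its $x_i$-degree is at least $n-n_0$, and every remaining $v_l$ with $l>n_0$ increases by at least $1$.
\end{itemize}
This is what makes a degree argument legitimate. The case $j=1$ is then immediate. For $j\ge 2$:
\begin{itemize}
\item discard the terms with $v_i<\lambda_1$, which vanish trivially;
\item extract $\CT_{x_i}$;
\item check that the hypothesis for all $j$-subsets $I_1\cup\{i\}$ implies it for all $(j-1)$-subsets $I_1$ of the remaining indices, with $(n,n_0,j)\mapsto(n-1,n_0-\chi(i\le n_0),j-1)$. This step uses $v_i\ge\lambda_1$.
\end{itemize}
Induction on $n$ then closes.

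Your reading of $p_I(n-n_0-j+p_I)$ is correct. It counts the pairs $(i,l)$ with $i\in I$, $i\le n_0$, $l\notin I$, $l>n_0$, and this is exactly what the induction accumulates. Without the splitting formula, however, nothing in your sketch produces it.
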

For two integer sequences $\alpha$ and $\beta$ with only
finite many nonzero parts, define $\alpha\leq \beta$ if $\alpha_1+\cdots+\alpha_i\leq \beta_1+\cdots+\beta_i$ for all $i\geq 1$. Write $\alpha<\beta$ if $\alpha\leq \beta$ but $\alpha\neq \beta$. Now for $v \in \mathbb{Z}^{n}$ and $\lambda$ a partition such that $|v|=|\lambda|$, 
the condition $\lambda \nleq v^+$ implies that there exists an integer $0<j<n$, such that $\sum_{k=1}^j\lambda_k>\sum_{k=1}^j v_k^+$. Together with $p_I(n-n_0-j+p_I) \geq 0$ for every $I=\{i_1,\ldots,i_j\}\subseteq\{1,\ldots,n\}$, we have
\[
\sum_{k=1}^j\lambda_k > \sum_{k=1}^j v_k^+ \geq \sum_{k=1}^j v_{i_k}-p_I(n-n_0-j+p_I).
\]  
Therefore, we can apply Theorem~\ref{Thm-van} to obtain the next corollary.
\begin{cor}\label{Cor-van}
Let $v \in \mathbb{Z}^{n}$ and $\lambda$ be a partition such that $|v|=|\lambda|$. Then $D_{v,\lambda}(a;n,n_0)=0$,
if $\lambda \nleq v^+$. 
\end{cor}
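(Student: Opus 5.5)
The plan is to deduce Corollary~\ref{Cor-van} directly from Theorem~\ref{Thm-van}, by showing that the hypothesis $\lambda\nleq v^+$ produces an index $j$ with $0<j<n$ at which \eqref{e-thm-van} holds for every $j$-element subset $I$.

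\emph{Locating $j$.} Since $\lambda\nleq v^+$, there is some $j\geq 1$ with
\[
\sum_{k=1}^j\lambda_k>\sum_{k=1}^j v^+_k .
\]
I would check that such a $j$ cannot satisfy $j\geq n$. The sequence $v^+$ has only $n$ entries, so for every $j\geq n$ we have $\sum_{k\le j}v^+_k=|v|=|\lambda|$. Since $\lambda$ is a partition with nonnegative parts, $\sum_{k\le j}\lambda_k\le|\lambda|$. Hence the inequality fails for all $j\geq n$, and trivially for $j=0$. So the witnessing $j$ satisfies $0<j<n$, as Theorem~\ref{Thm-van} requires.

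\emph{Verifying \eqref{e-thm-van} for every $I$.} Fix any $j$-element subset $I=\{i_1,\dots,i_j\}\subseteq\{1,\dots,n\}$.
\begin{enumerate}
\item The sum $\sum_{k=1}^j v_{i_k}$ is a sum of $j$ entries of $v$. It is therefore at most the sum of the $j$ largest entries, which is $\sum_{k=1}^j v^+_k$.
\item Next I check that $p_I(n-n_0-j+p_I)\geq 0$. Clearly $p_I\ge 0$. The remaining $j-p_I$ indices of $I$ all lie in $\{n_0+1,\dots,n\}$, a set of size $n-n_0$. So $j-p_I\le n-n_0$, which gives $n-n_0-j+p_I\ge 0$.
\end{enumerate}
Combining these two facts,
\[
\sum_{k=1}^j v_{i_k}-p_I(n-n_0-j+p_I)\le\sum_{k=1}^j v^+_k<\sum_{k=1}^j\lambda_k .
\]
This is exactly \eqref{e-thm-van}, holding for every $j$-subset $I$. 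Theorem~\ref{Thm-van} then yields $D_{v,\lambda}(a;n,n_0)=0$.

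There is no real obstacle here, since all the analytic content sits in Theorem~\ref{Thm-van}. The only points needing care are the two small checks above: that $j$ lies strictly between $0$ and $n$, which uses $|v|=|\lambda|$ and $\lambda_k\ge0$, and that the correction term $p_I(n-n_0-j+p_I)$ is nonnegative, which uses a simple count of the indices of $I$ exceeding $n_0$.
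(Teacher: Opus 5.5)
Your proposal is correct and follows essentially the same route as the paper: you take the index $j$ witnessing $\lambda\nleq v^+$, bound $\sum_{k=1}^j v_{i_k}$ by $\sum_{k=1}^j v^+_k$, and use $p_I(n-n_0-j+p_I)\ge 0$ to apply Theorem~\ref{Thm-van}. Your explicit checks that $0<j<n$ and that $n-n_0-j+p_I\ge 0$ fill in details the paper only asserts.
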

Note that when $n_0=0$, Corollary~\ref{Cor-van} reduces to Cai's vanishing result in \cite[Theorem 3.8]{cai}.

Now we introduce the recursive part of our main results. Given a sequence $s=(s_1,\dots,s_n)$ and a subset $I=\{k_1,k_2,\dots,k_i\}$ of $\{1,2,\dots,n\}$, define
\[
s^{(I)}:=(s_1,s_2,\dots,\widehat{s}_{k_1},\dots,\widehat{s}_{k_2},\dots,\widehat{s}_{k_i},\dots,s_n),
\]
where $\widehat{t}$ denotes the omission of $t$. When $I=\{k\}$ for 
$k\in \{1,2,\dots,n\}$, we simply write $s^{(k)}$ for $s^{(I)}$. For a positive integer $m$, a positive integer sequence $a=(a_1,\dots,a_n)$, and two subsets $I$, $J\subseteq\{1,2,\dots,n\}$, let $a_I:=\sum_{i\in I}a_i$ and
\begin{equation}\label{def-L}
L_{m,I,J}(a):=\sum_{\substack{1\leq i\leq j\leq m\\ i\in I, j\notin J}}a_j.
\end{equation}
Denote $\widetilde{a}=\widetilde{a}(n,n_0):=(a_1-1,\ldots,a_{n_0}-1,a_{n_0+1},\ldots,a_n)$ and 
$[s]=\{1,2,\dots,s\}$ for a positive integer $s$. 
Let $\qbinom{n}{t}=\frac{(q)_n}{(q)_{t_1}(q)_{t_2}\cdots(q)_{t_k}}$ be the 
$q$-multinomial coefficient for a nonnegative integer sequence 
$t=(t_1,\dots,t_k)$.
\begin{thm}\label{p-recu}
Let $v=(v_1,\dots,v_n)\in \mathbb{Z}^n$ and $\lambda$ be a partition such that $|\lambda|=|v|$. 
Define $r:=\max(\{v_i-n+n_0:i\leq n_0\}\cup \{v_i:i\geq n_0+1\})$, $S_1:=\{i:1\leq i\leq n_0
\text{ and } v_i-n+n_0=r\}$, $S_2:=\{i : n_0+1 \leq i\leq n \text{ and } v_i=r\}$, $s_1:=|S_1|$ and $s_2:=|S_2|$. We have:
\begin{enumerate}[label=(\arabic*)]
\item If $\max\{v_i-n+n_0:i\leq n_0\} \geq \max\{v_i:i\geq n_0+1\}+s_1$ and $\lambda_1=\lambda_2=\dots=\lambda_{s_1}=r$, then
\begin{multline}\label{e-recu-A}
 D_{v,\lambda}(a;n,n_0)=(-1)^{s_1(n-n_0)}\frac{\qbinom{|\widetilde{a}|+r-1}{\widetilde{a},r-1}}
 {\qbinom{|\widetilde{a}|-\widetilde{a}_{S_1}+r-1}{\widetilde{a}^{(S_1)},r-1}}
 \sum_{\emptyset \neq J\subseteq S_1}(-1)^{|S_1\setminus J|}q^{L_{n_0,S_1,J}(\widetilde{a})}
 \frac{1-q^{\widetilde{a}_{J}}}{1-q^{|\widetilde{a}|-\widetilde{a}_{J}+r}}\\\times D_{(v+s_1\alpha)^{(S_1)},\lambda^{([s_1])}}\big(a^{(S_1)};n-s_1,n_0-s_1\big),
\end{multline}
where $\alpha=(\underset{n_0}{\underbrace{0,\dots,0}},\underset{n-n_0}{\underbrace{1,\dots,1}})$. 

\item If $\max\{v_i:i\geq n_0+1\} \geq \max\{v_i-n+n_0:i\leq n_0\}+ s_2$ and $\lambda_1=\lambda_2=\dots=\lambda_{s_2}=r$, then
      \begin{multline}\label{e-recu-B}
D_{v,\lambda}(a;n,n_0)=\frac{\qbinom{|\widetilde{a}|+r-1}{\widetilde{a},r-1}}
{\qbinom{|\widetilde{a}|-\widetilde{a}_{S_2}+r-1}{\widetilde{a}^{(S_2)},r-1}}\sum_{\emptyset \neq J\subseteq S_2}(-1)^{|S_2\setminus J|}q^{L_{n,S_2,J}(\widetilde{a})}
\frac{1-q^{\widetilde{a}_{J}}}{1-q^{|\widetilde{a}|-\widetilde{a}_{J}+r}}
\\\times D_{v^{(S_2)},\lambda^{([s_2])}}\big(a^{(S_2)};n-s_2,n_0\big).
\end{multline}
\end{enumerate}
 
\end{thm}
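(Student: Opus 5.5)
Our approach follows the Károlyi--Lascoux--Warnaar / Zhou strategy: expand $h_\lambda$ with respect to the variables indexed by $S$ ($S=S_1$ in case (1), $S=S_2$ in case (2)), and then compute the coefficient of $x_S^{v_S}$ by Lagrange interpolation in the variables $x_i$, $i\in S$. The vanishing result Theorem~\ref{Thm-van} controls all the unwanted terms.

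\textbf{Step 1: Splitting the alphabet.} Write $X^{(a;n_0)}=Y\cup Z$, where $Y$ collects the letters coming from $x_i$ with $i\in S$. Then
\[
h_{\lambda}(Y\cup Z)=\sum_{\mu}h_{\mu}(Y)\,h_{\lambda-\mu}(Z)
\]
(expanded part by part). The product $\prod_{i<j}(\cdot)$ splits into three factors: one involving only indices in $S$, one involving only indices outside $S$, and cross factors between $S$ and its complement.

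\textbf{Step 2: Degree count in the $x_S$ variables.} We bound the total degree in the $x_S$ variables. In case (2), a variable $x_i$ with $i>n_0$ can carry degree at most $r=v_i$ from $h_\lambda$ plus the cross factors. In case (1), the shift by $n-n_0$ comes from the reduced exponents $a_j-1$ in the cross factors with $j\le n_0<i$; this is exactly the origin of $v_i-n+n_0$ and of the shift $v+s_1\alpha$.

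Using the hypothesis $\max\{\cdot\}\ge\max\{\cdot\}+s$, we argue that only terms in which each of the first $s$ parts of $\lambda$ (all equal to $r$) is fully absorbed by $Y$ can contribute. All other terms we show vanish by applying Theorem~\ref{Thm-van} to a reduced constant term, with $j=s$ and the subset inequality~\eqref{e-thm-van}. The margin $s_1$ (resp. $s_2$) in the hypothesis is what guarantees that inequality~\eqref{e-thm-van} holds for every $s$-subset.

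\textbf{Step 3: The surviving contribution.} For the surviving terms, the $S$-variables contribute $h_r(Y)^{s}$ times the product of factors. We take the constant term in $x_S$ with the other variables fixed. Following Zhou, we write $h_r(Y)$ through the generating function $\prod 1/(1-zy)$, and evaluate the $x_S$ constant term by the residue/Lagrange interpolation formula at the points $x_i=x_j q^{k}$. This gives a sum over nonempty $J\subseteq S$ of a $q$-hypergeometric term
\[
(-1)^{|S\setminus J|}q^{L(\widetilde a)}\frac{1-q^{\widetilde a_J}}{1-q^{|\widetilde a|-\widetilde a_J+r}},
\]
multiplied by the remaining constant term. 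Because the leftover factors are homogeneous of the right degree, that remaining constant term is exactly $D_{\cdot,\lambda^{([s])}}$ on $n-s$ variables, with $n_0$ reduced by $s_1$ in case (1).

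The multinomial prefactor is obtained as the ratio of the $q$-Dyson-type normalizations, computed via the $q$-Dyson identity~\eqref{q-Dyson} in its two-part form. In case (1), the sign $(-1)^{s_1(n-n_0)}$ and the shift $s_1\alpha$ come from rewriting $(qx_j/x_i)_{a_j}$ versus $(x_i/x_j)_{a_i-1}$ for $i\in S_1$, $j>n_0$: each such pair inverts one factor, producing a factor $-x_i/x_j$.

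\textbf{Main obstacle.} We expect the main obstacle to be Step 3, where the interpolation sum must be evaluated explicitly in closed form. Zhou's one-part computation handles the case where $S$ lies entirely inside the unshifted block. Here the $S_1$ case has mixed exponents $a_i-1$ for $i\le n_0$ against $a_j$ for $j>n_0$, which produces the $q^{L_{n_0,S_1,J}}$ weights. We would first reduce case (1) to the structure of case (2) by a change of variables $x_i\mapsto x_i^{-1}$ and reversal of indices (which swaps the roles of the two blocks up to the factor $(-1)^{s_1(n-n_0)}$), and then reuse Zhou's evaluation verbatim.
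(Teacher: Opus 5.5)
\textbf{There is a genuine gap: Step 2 is false.} Once $h_\lambda(Y\cup Z)$ is expanded, the terms in which the first $s$ parts are only partly absorbed by $Y$ do contribute in general. No appeal to Theorem~\ref{Thm-van} can remove them.

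\emph{A counterexample.} Take $n=2$, $n_0=1$, $a=(2,2)$, $v=(0,1)$, $\lambda=(1)$. Then $X^{(a;1)}=\{x_1,x_2,qx_2\}$ and the product is $(1-x_1/x_2)(1-qx_2/x_1)$. We have $r=1$ and $S_2=\{2\}$, and the hypothesis of case (2) holds. With $Y=\{x_2,qx_2\}$:
\begin{itemize}
\item the term $h_1(Y)$ contributes $(1+q)^2$;
\item the term $h_1(Z)=x_1$ contributes $\CT_x (x_1/x_2)(1-x_1/x_2)(1-qx_2/x_1)=-q\neq 0$.
\end{itemize}
The true value $D_{v,\lambda}(a;2,1)=1+q+q^2$, which is what \eqref{e-recu-B} gives, needs both.

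\emph{Step 3 is also unjustified.} The constant term in $x_S$, with the other variables fixed, is a Laurent polynomial in the remaining variables. It still carries the cross factors between $S$ and its complement, and homogeneity does not make it a scalar multiple of the $(n-s)$-variable product. So its identification with $D_{\cdot,\lambda^{([s])}}$ has no support, and the closed-form evaluation itself is deferred.

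\emph{The reduction of case (1) to case (2) fails.} The map $x_i\mapsto x_i^{-1}$ with index reversal sends $h_\lambda(X^{(a;n_0)})$ to $h_\lambda$ of reciprocal letters. It also does not preserve the placement of $q$ and of the exponents in the factors $(x_i/x_j)_{\cdot}(qx_j/x_i)_{\cdot}$. The two cases are genuinely different: only case (1) lowers $n_0$ and shifts $v$ by $s_1\alpha$.

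\textbf{The missing idea} is the mechanism that makes ``only $i\in S$ contributes'' true and removes the cross factors. It assigns the part $\lambda_1=r$ to one variable at the level of residues, not monomials.
\begin{enumerate}
\item The paper writes $D_{v,\lambda}=\CT_{x,w}x^{-v}w^\lambda F_{n,n_0}(a;x,w)$ and expands in partial fractions in $w_1$, as in \eqref{e-Fsplit}.
\item Each residue coefficient $A_{ij}$, $B_{ij}$ is a power series in $x_i$ of lowest degree $n-n_0$, resp.\ $0$. All its factors involving $x_i$ have the form $(c\,x_i/x_l)_m$ or $(x_i/w_t)_m^{-1}$, apart from the monomial $\prod_{l>n_0}(-x_i/x_l)$ in $A_{ij}$. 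That monomial is where the sign $(-1)^{n-n_0}$ and the shift $\alpha$ come from.
\item With $\lambda_1=r$, degree considerations kill every $i\notin S_1\cup S_2$. For $i\in S_1\cup S_2$, the required coefficient is an evaluation at $x_i=0$ (after removing $x_i^{n-n_0}$ when $i\le n_0$). This kills every cross factor and leaves a scalar times $F_{n-1,\cdot}$.
\item Summing over $j$ by Proposition~\ref{prop-sum} gives the one-step formula \eqref{e-ind}.
\item The theorem then follows by induction on $s$, combining steps with \eqref{e-cal-q-bino} and Zhou's identity, Proposition~\ref{prop-eq}.
\end{enumerate}

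Theorem~\ref{Thm-van} plays no role in this argument. The margin $s_1$ (resp.\ $s_2$) in the hypothesis is what allows \eqref{e-ind} to be iterated $s$ times:
\begin{itemize}
\item removing an index of $S_1$ raises the second-block entries by one, via $\alpha$;
\item removing an index of $S_2$ raises every $v_l-n+n_0$ with $l\le n_0$ by one;
\item the margin prevents the other block from reaching the maximum $r$ before all of $S$ has been removed.
\end{itemize}
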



Our original motivation is to investigate the case $\lambda=v^+$.
This can be easily obtained by Theorem \ref{Thm-van} and Theorem \ref{p-recu}.
\begin{cor}\label{Thm-rec}
With the same notation as above, we have: 
\begin{enumerate}[label=(\arabic*)]
\item  If $n_0 < n$ and $\min\{v_i:n_0+1\leq i \leq n\} < \max\{v_i:1\leq i \leq n_0\}$, then
$D_{v,v^+}(a;n,n_0)=0;$   
\item If $n_0 < n$ and $\min\{v_i:n_0+1\leq i \leq n\} \geq \max\{v_i:1\leq i \leq n_0\}$, then
\begin{multline}\label{e-thm-recu1}
D_{v,v^+}(a;n,n_0)=\frac{\qbinom{|\widetilde{a}|+r-1}{\widetilde{a},r-1}}
{\qbinom{|\widetilde{a}|-\widetilde{a}_{S_2}+r-1}{\widetilde{a}^{(S_2)},r-1}}
\sum_{\emptyset \neq J\subseteq S_2}(-1)^{|S_2\setminus J|}q^{L_{n,S_2,J}(\widetilde{a})}
\frac{1-q^{\widetilde{a}_{J}}}{1-q^{|\widetilde{a}|-\widetilde{a}_{J}+r}}\\
\times D_{v^{(S_2)},(v^{(S_2)})^+}\big(a^{(S_2)};n-s_2,n_0\big);
\end{multline}
\item  If $n_0=n$, then
\begin{multline}\label{e-thm-recu2}
    D_{v,v^+}(a;n,n)=\frac{\qbinom{|\widetilde{a}|+r-1}{\widetilde{a},r-1}}
{\qbinom{|\widetilde{a}|-\widetilde{a}_{S_1}+r-1}{\widetilde{a}^{(S_1)},r-1}}
\sum_{\emptyset \neq J\subseteq S_1}(-1)^{|S_1\setminus J|}q^{L_{n,S_1,J}(\widetilde{a})}
\frac{1-q^{\widetilde{a}_{J}}}{1-q^{|\widetilde{a}|-\widetilde{a}_{J}+r}}\\
\times D_{v^{(S_1)},(v^{(S_1)})^+}\big(a^{(S_1)};n-s_1,n-s_1\big). 
\end{multline}
\end{enumerate}
\end{cor}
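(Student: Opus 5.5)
We derive the corollary from Theorem~\ref{Thm-van} and Theorem~\ref{p-recu} with $\lambda=v^+$. Throughout, write $M_1:=\max\{v_i:i\le n_0\}$ and $M_2:=\max\{v_i:i\ge n_0+1\}$. Note that $v^+$ is a genuine partition only when $v$ is a weak composition; we assume this, as the corollary implicitly does.

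\emph{Part (1).} Here $n_0<n$, and some index $i_0\le n_0$ has $v_{i_0}>\min\{v_i:i>n_0\}$. We take $j$ to be the number of entries of $v$ that are at least $v_{i_0}$ (with a suitable tie convention), so that $\sum_{k\le j}\lambda_k=\sum_{k\le j}v^+_k$. We then check \eqref{e-thm-van} for every $j$-subset $I$.
\begin{itemize}[label=--]
\item If $p_I=0$, then $I$ lies in $\{n_0+1,\dots,n\}$. Such an $I$ must use an entry strictly smaller than the top $j$ entries, or at least miss $v_{i_0}$, so $\sum_{i\in I}v_i<\sum_{k\le j}v^+_k$.
\item If $p_I\ge1$, then $p_I(n-n_0-j+p_I)\ge1$ as soon as $n-n_0-j+p_I\ge1$. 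This holds because $j-p_I\le n-n_0$, with equality only in degenerate cases that need separate treatment.
\end{itemize}
This case analysis is delicate. The choice of $j$ must guarantee strictness both when $p_I=0$ and when $n-n_0-j+p_I=0$. I would first try $j=|\{i:v_i\ge v_{i_0}\}|$, or alternatively $j$ equal to the position just below the smallest large-index entry, and adjust by ties. This is the main obstacle, together with ensuring $0<j<n$. The bound $j<n$ uses that $\min_{i>n_0}v_i<v_{i_0}$, so at least one entry is excluded.

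\emph{Part (2).} Now $\min_{i>n_0}v_i\ge M_1$, so $M_2\ge M_1\ge M_1-(n-n_0)+s_2$ whenever $s_2\le n-n_0$, which always holds. Hence $r=M_2$, provided $M_2\ge M_1-(n-n_0)$, which is clear. The hypothesis of Theorem~\ref{p-recu}(2) then holds. Moreover, the largest $s_2$ entries of $v$ are exactly the entries indexed by $S_2$, all equal to $r$. Indeed, any entry with index $\le n_0$ equal to $r$ would give $M_1=r=\min_{i>n_0}v_i$. That situation needs a separate check: either $S_2$ is all of $\{n_0+1,\dots,n\}$, or one notes that ties still yield $\lambda_1=\dots=\lambda_{s_2}=r$. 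In every case $\lambda_1=\dots=\lambda_{s_2}=r$, and $\lambda^{([s_2])}=(v^{(S_2)})^+$ because removing the $s_2$ copies of $r$ from $v$ leaves the multiset of $v^{(S_2)}$. Substituting into \eqref{e-recu-B} gives \eqref{e-thm-recu1}.

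\emph{Part (3).} With $n_0=n$, we have $\alpha=0$ and $r=M_1$, and $S_1$ is the set of indices where the maximum is attained. The hypothesis of Theorem~\ref{p-recu}(1) reads $M_1\ge \max\emptyset+s_1$. We interpret the empty maximum as $-\infty$, as the statement implicitly does, so the hypothesis holds. The sign $(-1)^{s_1(n-n_0)}$ equals $1$, $L_{n_0,\cdot,\cdot}=L_{n,\cdot,\cdot}$, and $\lambda^{([s_1])}=(v^{(S_1)})^+$ since $\lambda_1=\dots=\lambda_{s_1}=r$. Substituting into \eqref{e-recu-A} gives \eqref{e-thm-recu2}.
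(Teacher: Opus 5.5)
\paragraph{Parts (2) and (3).} These are fine and follow the paper's intended route; the paper only says the corollary follows from Theorems~\ref{Thm-van} and~\ref{p-recu}. In (2), your sentence that the largest $s_2$ entries of $v$ are exactly those indexed by $S_2$ is false when some $v_i$ with $i\le n_0$ equals $r$. You do not need it. It suffices that $r=\max_{i>n_0}v_i$ is the global maximum of $v$ and occurs at least $s_2$ times. That already gives $\lambda_1=\cdots=\lambda_{s_2}=r$ and $\lambda^{([s_2])}=(v^{(S_2)})^+$.

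\paragraph{The gap is in part (1).} You never fix $j$, and you never verify \eqref{e-thm-van} for the subsets on which the penalty $p_I(n-n_0-j+p_I)$ vanishes; that verification is the whole content of the argument. Since $j-p_I\le n-n_0$ always, the penalty is zero exactly when $p_I=0$ or $I\supseteq\{n_0+1,\dots,n\}$. The second family is not ``degenerate''; it is precisely where strictness must be proved. Your reason for the first family (``or at least miss $v_{i_0}$'') also does not give a strict inequality under an arbitrary tie convention.

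The cleanest fix is $j=n-n_0$. Hypothesis (1) forces $1\le n_0<n$, so $0<j<n$. The penalty becomes $p_I^2$, which vanishes only for $I_0=\{n_0+1,\dots,n\}$. Swap the index of $\min_{i>n_0}v_i$ in $I_0$ for an index $i_0\le n_0$ with $v_{i_0}=\max_{i\le n_0}v_i$; this strictly increases the sum, so $\sum_{i\in I_0}v_i<\sum_{k\le n-n_0}v_k^+$. Every other $I$ has penalty at least $1$ and $\sum_{i\in I}v_i\le\sum_{k\le j}v_k^+$.

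Your first candidate also works once it is checked. Take $T=\{i:v_i\ge v_{i_0}\}$ and $j=|T|$. The only $j$-subset attaining $\sum_{k\le j}v_k^+$ is $T$ itself, because entries outside $T$ are strictly below all entries in $T$. Moreover, $T$ contains $i_0\le n_0$ but omits the index of $\min_{i>n_0}v_i$. Hence $T$ has penalty at least $1$, while every other $I$ has strictly smaller sum.
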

Note that i) in Corollary~\ref{Thm-rec}, the case (2) when $n_0=0$ is essentially the case (3), and the case (3) is just Zhou's recursion for $D_{v,v^+}(a;n)$ \cite[Theorem 1.1]{Zhou-JCTA};
ii) the complete symmetric functions $h_{\lambda}$ in Theorem~\ref{Thm-van} and  Corollary~\ref{Thm-rec} can be replaced by the Schur functions $s_{\lambda}$.
We will explain this in Section~\ref{Schur}.

The method employed to prove Theorem~\ref{Thm-van} and Theorem~\ref{p-recu}
is based on Cai's splitting idea \cite{cai}. Using the idea we obtain an explicit splitting formula (see \eqref{e-Fsplit} below) for the rational function,
\begin{equation}\label{d-F}
F_{n,n_0}(a;x,w):=\prod_{1\leq i<j\leq n}
(x_i/x_j)_{a_i-\chi(i\leq n_0)}(qx_j/x_i)_{a_j-\chi(i\leq n_0)}
\prod_{i=1}^n\prod_{j=1}^s(x_i/w_j)_{a_i-\chi(i\leq n_0)}^{-1},
\end{equation}
where $w:=(w_1,\dots,w_s)$ is a sequence of parameters.
Throughout this paper, we assume that the $w_j$ are large enough so that all terms of the form $cx_i/w_j$ in \eqref{d-F}
satisfy $|cx_i/w_j|<1$, where $c\in \mathbb{C}(q)\setminus \{0\}$. Hence, we can write
\[
\frac{1}{1-cx_i/w_j}=\sum_{k\geq 0} (cx_i/w_j)^k.
\]
By the generating function of complete symmetric functions \eqref{e-gfcomplete}, it is easy to see that the constant term $D_{v,\lambda}(a;n,n_0)$ equals a certain coefficient of $F_{n,n_0}(a;x,w)$. That is
\begin{equation}\label{e-relation}
D_{v,\lambda}(a;n,n_0)=\CT_{x,w}x^{-v}w^{\lambda}F_{n,n_0}(a;x,w),
\end{equation}
where the length of $w$ should be no less than the length of $\lambda$.
In \cite{cai}, Cai gave a splitting formula for $F_{n,0}(a;x,w)$. In this paper, we will give a splitting formula for $F_{n,n_0}(a;x,w)$, which is equivalent to the two-part case splitting formula in~\cite{JWZZ}. Using the splitting formula, we prove Theorem~\ref{Thm-van} and obtain an inductive formula for certain $D_{v,\lambda}(a;n,n_0)$, see Lemma ~\ref{lem-ind} below. 

The remainder of this paper is organized as follows.
In the next section,
we obtain the splitting formula for $F_{n,n_0}(a;x,w)$.
In Section~\ref{sec-proof-van}, we prove Theorem~\ref{Thm-van} by induction.
In Section~\ref{sec-inductive}, we obtain an inductive formula for certain $D_{v,\lambda}(a;n,n_0)$ using the splitting formula.
In Section~\ref{sec-proof}, we prove Theorem~\ref{p-recu}. 
In Section~\ref{Schur}, we give other forms of Theorem~\ref{Thm-van} and  Corollary~\ref{Thm-rec}.

\section{A splitting formula for $F_{n,n_0}(a;x,w)$}\label{sec-splitting}

In this section, we give an explicit splitting formula for the rational function $F_{n,n_0}(a;x,w)$. It reduces to Cai's splitting formula \cite[Theorem 3.3]{cai} when $n_0=0$.

The proof of the splitting formula relies on the following straightforward results.
\begin{lem}
Let $i,j$ be nonnegative integers and $t$ be an integer. Then
\begin{subequations}\label{e-ab}
\begin{equation}\label{prop-b1}
\frac{(1/y)_{i}(qy)_j}{(q^{-t}/y)_i}=q^{it}(q^{1-i}y)_t(q^{t+1}y)_{j-t} \quad \text{ for $0\leq t\leq j$},
\end{equation}
\begin{equation}\label{prop-b2}
\frac{(y)_j(q/y)_i}{(q^{-t}/y)_i}=q^{i(t+1)}(q^{-i}y)_{t+1}(q^{t+1}y)_{j-t-1} \quad \text{ for $-1\leq t\leq j-1$},
\end{equation}
\begin{equation}\label{prop-c}
\frac{(y)_j(q/y)_i}{(q^{-t}/y)_{i+1}}=-yq^{(i+1)t}(q^{-i}y)_t(q^{t+1}y)_{j-t-1} \quad \text{ for $0\leq t\leq j-1$}.
\end{equation}
\end{subequations}
\end{lem}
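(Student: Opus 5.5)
This lemma is a direct manipulation of $q$-shifted factorials. The plan is to prove each identity by writing every factor as an explicit finite product and cancelling. The basic tools are the splitting rule $(z)_{m+k}=(z)_m(zq^m)_k$ and the reflection identity
\[
(c/y)_i=(-1)^i c^i q^{\binom{i}{2}} y^{-i}\,(q^{1-i}y/c)_i,
\]
which follows by reversing the order of the factors and pulling out $-cq^{k}/y$ from each $1-cq^k/y$.

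For \eqref{prop-b1}, apply reflection to numerator and denominator. This gives
\[
(1/y)_i=(-1)^i q^{\binom i2}y^{-i}(q^{1-i}y)_i,\qquad (q^{-t}/y)_i=(-1)^i q^{-it+\binom i2}y^{-i}(q^{1-i+t}y)_i,
\]
so the quotient equals $q^{it}(q^{1-i}y)_i/(q^{1-i+t}y)_i$. Because $t\ge 0$, splitting gives $(q^{1-i}y)_{i+t}=(q^{1-i}y)_i(qy)_t=(q^{1-i}y)_t(q^{1-i+t}y)_i$. Hence $(q^{1-i}y)_i/(q^{1-i+t}y)_i=(q^{1-i}y)_t/(qy)_t$. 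Multiplying by $(qy)_j$ and using $(qy)_j=(qy)_t(q^{t+1}y)_{j-t}$, valid for $t\le j$, gives the right-hand side.

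Identity \eqref{prop-b2} goes the same way. Reflection turns $(q/y)_i/(q^{-t}/y)_i$ into $q^{i(t+1)}(q^{-i}y)_i/(q^{-i+t+1}y)_i$. Then $(q^{-i}y)_{i+t+1}$ is split in two ways to replace this by $q^{i(t+1)}(q^{-i}y)_{t+1}/(y)_{t+1}$, which needs $t+1\ge 0$. Finally $(y)_j=(y)_{t+1}(q^{t+1}y)_{j-t-1}$, which needs $t+1\le j$.

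For \eqref{prop-c}, write $(q^{-t}/y)_{i+1}=(q^{-t}/y)_i(1-q^{i-t}/y)$ and apply \eqref{prop-b2} with $t-1$ in place of $t$. This is allowed since $0\le t\le j-1$ gives $-1\le t-1\le j-1$. It yields
\[
q^{it}(q^{-i}y)_t(q^{t}y)_{j-t}\,/\,(1-q^{i-t}/y).
\]
Then $(q^ty)_{j-t}=(1-q^ty)(q^{t+1}y)_{j-t-1}$, and
\[
\frac{1-q^ty}{1-q^{i-t}/y}=\frac{-yq^{t-i}\,(1-q^{t}y)}{1-yq^{t-i}}.
\]
The factor $1-yq^{t-i}$ must cancel against the last factor of $(q^{-i}y)_t$ only if that helps, so the bookkeeping here needs care.

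The main obstacle is this last step. Checking directly: $(1-q^ty)/(1-q^{i-t}/y)$ does not simplify unless $i$ and $t$ are related, so a straight reduction to \eqref{prop-b2} may not be the right route. The fallback is to redo \eqref{prop-c} from scratch. Reflect $(q^{-t}/y)_{i+1}$ as a product of length $i+1$:
\[
(q^{-t}/y)_{i+1}=(-1)^{i+1}q^{-(i+1)t+\binom{i+1}{2}}y^{-i-1}(q^{t-i}y)_{i+1},
\]
and reflect $(q/y)_i$ as before. After cancelling the signs and powers, what remains is
\[
-y\,q^{(i+1)t}\,\frac{(q^{-i}y)_i\,(y)_j}{(q^{t-i}y)_{i+1}}.
\]
Then $(q^{-i}y)_{i+t}=(q^{-i}y)_i(y)_t=(q^{-i}y)_t(q^{t-i}y)_i$, together with $(q^{t-i}y)_{i+1}=(q^{t-i}y)_i(1-q^ty)$ and $(y)_j=(y)_t(1-q^ty)(q^{t+1}y)_{j-t-1}$, valid for $t\le j-1, collapses this to the stated right-hand side. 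Throughout, the only care needed is tracking the exponents $\binom{i}{2}$ and $\binom{i+1}{2}$ and the sign $(-1)^{i+1}$, and using the ranges of $t$ to make each split legitimate.
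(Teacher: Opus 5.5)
Your proof is correct and is essentially the paper's: both use only the splitting rule $(z)_{m+k}=(z)_m(zq^m)_k$ and reversal of products. The differences are in order and in one shortcut. You reverse the length-$i$ (or length-$(i+1)$) products first and cancel afterwards, whereas the paper cancels first and then reverses the resulting length-$t$ products. The paper also obtains \eqref{prop-b2} from \eqref{prop-b1} via $y\mapsto y/q$, $t\mapsto t+1$ rather than recomputing it.

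Your abandoned first route to \eqref{prop-c} failed only because you invoked \eqref{prop-b2} at $t-1$, whose denominator is $(q^{1-t}/y)_i$, not $(q^{-t}/y)_i$. Used at $t$ itself, the extra factor $1-q^{t-i}y$ of $(q^{-i}y)_{t+1}$ divided by $1-q^{i-t}/y$ equals $-yq^{t-i}$, which gives \eqref{prop-c} at once.
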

Note that the case $t=j$ of \eqref{prop-b1} (taking $y\mapsto y/q$) is the standard fact in \cite[Equation~(I.13)]{GR}.
The condition $0\leq t\leq j-1$ in \eqref{prop-c} forces $j$ to be a positive integer.
\begin{proof}
For $0\leq t\leq j$,
\[
\frac{(1/y)_{i}(qy)_j}{(q^{-t}/y)_i}=\frac{(q^{i-t}/y)_t(qy)_j}{(q^{-t}/y)_t}
=\frac{(-1/y)^tq^{it-\binom{t+1}{2}}(q^{1-i}y)_t(qy)_j}{(-1/y)^{t}q^{-\binom{t+1}{2}}(qy)_t}
=q^{it}(q^{1-i}y)_t(q^{t+1}y)_{j-t}.
\]

Taking $y\mapsto y/q$ and $t\mapsto t+1$ in \eqref{prop-b1} yields
\eqref{prop-b2} for $-1\leq t\leq j-1$. For $0\leq t\leq j-1$,
\[
\frac{(y)_j(q/y)_i}{(q^{-t}/y)_{i+1}}=
\frac{(y)_j(q^{i-t+1}/y)_t}{(q^{-t}/y)_{t+1}}=
\frac{(y)_j(-1/y)^tq^{it-\binom{t}{2}}(q^{-i}y)_t}{(-1/y)^{t+1}q^{-\binom{t+1}{2}}(y)_{t+1}}=
-yq^{(i+1)t}(q^{-i}y)_t(q^{t+1}y)_{j-t-1}. \qedhere
\]
\end{proof}

By partial fraction decomposition, we have the following splitting formula for the rational function $F_{n,n_0}(a;x,w)$.
\begin{prop}
Let $F_{n,n_0}(a;x,w)$ be defined as \eqref{d-F}. Then
\begin{equation}\label{e-Fsplit}
F_{n,n_0}(a;x,w)=\sum_{i=1}^{n_0}\sum_{j=0}^{a_i-2}\frac{A_{ij}}{1-q^jx_i/w_1}+\sum_{i=n_0+1}^{n}\sum_{j=0}^{a_i-1}\frac{B_{ij}}{1-q^jx_i/w_1},
\end{equation}
where
\begin{align}\label{A}
A_{ij}&=
\frac{q^{j(|a|-a_i-n_0+1)+\sum_{l=i+1}^{n_0}(a_l-1)}}{(q^{-j})_{j}(q)_{a_i-j-2}\prod_{t=2}^s{\big(x_i/w_t\big)_{a_i-1}}}F_{n-1,n_0-1}(a^{(i)};x^{(i)},w^{(1)})
\\\nonumber&\quad\times \prod_{l=1}^{i-1}\big(q^{2-a_l}x_i/x_l\big)_{j}\big(q^{j+1} x_i/x_l\big)_{a_i-j-1}\prod_{l=i+1}^{n_0}\big(q^{1-a_l}x_i/x_l\big)_{j+1}\big(q^{j+1}x_i/x_l\big)_{a_i-j-2}
\\\nonumber&\quad\times\prod_{l=n_0+1}^{n}(-x_i/x_l)\big(q^{1-a_l}x_i/x_l\big)_{j}\big(q^{j+1}x_i/x_l\big)_{a_i-j-2},
\end{align}
and 
\begin{align}\label{B}
B_{ij}&=\frac{q^{j(|a|-a_i-n_0)+\sum_{l=i+1}^{n}a_l}}{(q^{-j})_{j}(q)_{a_i-j-1}\prod_{t=2}^{s}(x_i/w_t)_{a_i}}F_{n-1,n_0}(a^{(i)};x^{(i)},w^{(1)})
\\\nonumber&\quad\times \prod_{l=1}^{n_0}\big(q^{2-a_l}x_i/x_l\big)_{j}\big(q^{j+1} x_i/x_l\big)_{a_i-j-1}\prod_{l=n_0+1}^{i-1}\big(q^{1-a_l}x_i/x_l\big)_{j}\big(q^{j+1}x_i/x_l\big)_{a_i-j}\
\\\nonumber&\quad\times\prod_{l=i+1}^{n}\big(q^{-a_l}x_i/x_l\big)_{j+1}\big(q^{j+1}x_i/x_l\big)_{a_i-j-1}.\
\end{align}
\end{prop}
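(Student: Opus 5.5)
We would prove the splitting formula by partial fractions in the single variable $w_1$, or equivalently in $y:=x_i/w_1$, with all other variables held fixed. As a function of $w_1$, $F_{n,n_0}(a;x,w)$ is a rational function whose only $w_1$-dependence is
\[
\prod_{i=1}^n(x_i/w_1)^{-1}_{a_i-\chi(i\le n_0)} .
\]
Its poles in $w_1$ sit at $w_1=q^jx_i$, with $0\le j\le a_i-2$ for $i\le n_0$ and $0\le j\le a_i-1$ for $i>n_0$. For generic $x$ these poles are simple and distinct. Moreover, $F\to$ (the $w_1$-free part) as $w_1\to\infty$, while each term $1/(1-q^jx_i/w_1)\to1$.

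It is therefore cleaner to regard $F$ as a rational function of $u=1/w_1$. It vanishes at $u=\infty$ provided the total degree works out; otherwise we check that the constant part is absorbed. More directly, one can use the identity that a rational function of $u$ with simple poles at $u=1/(q^jx_i)$ is determined by its residues plus its polynomial part. Here the denominator has degree $N=\sum_i (a_i-\chi(i\le n_0))$ in $u$ and the numerator has degree $0$, so $F$ is a proper rational function of $u$ vanishing at $u=\infty$. Hence
\[
F=\sum_{i,j}\frac{C_{ij}}{1-q^jx_iu},\qquad C_{ij}=\bigl[(1-q^jx_iu)F\bigr]_{u=q^{-j}/x_i}.
\]
This already gives the shape \eqref{e-Fsplit}. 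The remaining work is to identify $C_{ij}$ with $A_{ij}$ for $i\le n_0$ and with $B_{ij}$ for $i>n_0$.

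To compute $C_{ij}$ we set $w_1=q^jx_i$ in every factor except the vanishing one. There are four kinds of factor.

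\begin{enumerate}[label=(\roman*)]
\item The factor $(x_i/w_1)_{a_i-\chi}$ with the zero removed gives
\[
(q^{-j})_j\,(q)_{a_i-\chi-j-1}.
\]
This produces the denominators $(q^{-j})_j(q)_{a_i-j-2}$ for $i\le n_0$ and $(q^{-j})_j(q)_{a_i-j-1}$ for $i>n_0$.

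\item For each $l\ne i$, the factor $(x_l/w_1)^{-1}_{a_l-\chi(l)}$ becomes
\[
(q^{-j}x_l/x_i)^{-1}_{a_l-\chi(l)}.
\]

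\item These combine with the Dyson factors involving the pair $\{i,l\}$, which are $(x_i/x_l)(qx_l/x_i)$-type products with exponents depending on $\chi(\min(i,l)\le n_0)$. We apply \eqref{prop-b1}, \eqref{prop-b2} and \eqref{prop-c} with $y=x_l/x_i$ or $y=x_i/x_l$ and $t=j$ (or $t=j$ together with $i\mapsto a_l-\chi$, $j\mapsto a_i-\chi$).

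\item The Dyson factors not involving $i$, together with $(x_l/w_t)^{-1}$ for $t\ge2$ and $l\ne i$, reassemble into $F_{n-1,n_0'}(a^{(i)};x^{(i)},w^{(1)})$. The factors $(x_i/w_t)^{-1}$ for $t\ge2$ remain as displayed.
\end{enumerate}

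The case analysis in step (iii) splits according to the four configurations $l<i$ or $l>i$, and $l\le n_0$ or $l>n_0$, as follows.

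\begin{itemize}
\item For $l<i$, the pair contributes $(x_l/x_i)_{a_l-\chi(l)}(qx_i/x_l)_{a_i-\chi(l)}$. After inverting $y=x_i/x_l$, we use \eqref{prop-b2}-type rewriting. The shift $\chi(l\le n_0)$ in the exponent $a_i-\chi(l)$ explains why $(q^{2-a_l}\cdot)_j(q^{j+1}\cdot)_{a_i-j-1}$ appears for $l\le n_0$, and $(q^{1-a_l}\cdot)_j(q^{j+1}\cdot)_{a_i-j}$ for $n_0<l<i$.
\item For $l>i$, the pair contributes $(x_i/x_l)_{a_i-\chi(i)}(qx_l/x_i)_{a_l-\chi(i)}$. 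Here \eqref{prop-b1} applies, or \eqref{prop-c} when the Pochhammer length of the pole factor exceeds the Dyson one by one. The latter happens precisely when $i\le n_0<l$: there the exponent of $(qx_l/x_i)$ is $a_l-1$ while the pole factor has length $a_l$. This is the source of the extra factor $-x_i/x_l$.
\end{itemize}

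The range conditions such as $0\le t\le j-1$ hold because $j\le a_i-\chi(i)-1$.

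The main obstacle is the bookkeeping of the $q$-powers. Each application of \eqref{e-ab} contributes $q^{(\text{length})\cdot t}$, namely $q^{j(a_l-\chi)}$, $q^{j(a_l-\chi+1)}$ or $q^{(a_l)j}$. Summing these over $l\ne i$ should give $q^{j(|a|-a_i-n_0+1)}$ for $i\le n_0$ and $q^{j(|a|-a_i-n_0)}$ for $i>n_0$. The extra $q^{\sum_{l>i}(a_l-\chi)}$ comes from pulling out $(-x_l/x_i)$-type monomials when reversing $(x_i/x_l)_k$. We will verify this tally carefully, separately for each of the four configurations, and check that the signs cancel except for the $(-x_i/x_l)$ factors that are kept explicitly.
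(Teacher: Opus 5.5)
Your proposal is correct and is essentially the paper's own proof: partial fractions in $w_1$, with coefficients obtained by setting $w_1=q^jx_i$ in $(1-q^jx_i/w_1)F_{n,n_0}(a;x,w)$, then a pair-by-pair application of the identities \eqref{e-ab}, where \eqref{prop-c} produces the factor $-x_i/x_l$ exactly when $i\le n_0<l$. When you write out the details, note two slips. First, your labels \eqref{prop-b1} and \eqref{prop-b2} are swapped: pairs with $l<i$ use \eqref{prop-b1} with $y=x_i/x_l$, and the remaining pairs with $l>i$ use \eqref{prop-b2}. Second, \eqref{prop-b2} contributes $q^{(j+1)(a_l-\chi(i\le n_0))}$ rather than $q^{j(a_l-\chi+1)}$, and this $j+1$ is exactly the source of the extra factor $q^{\sum_{l=i+1}^{n_0}(a_l-1)}$ (resp.\ $q^{\sum_{l=i+1}^{n}a_l}$); no further reversal of Pochhammer symbols is needed beyond what the lemma already does.
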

Note that $A_{ij}$ is a power series in $x_i$ whose lowest degree is at least $n-n_0$, and $B_{ij}$ is a power series in $x_i$ whose lowest degree is at least $0$.

\begin{proof}
By the partial fraction decomposition, we can rewrite $F_{n,n_0}(a;x,w)$ as \eqref{e-Fsplit}, where
\begin{align}\label{Aij}
    A_{ij}=F_{n,n_0}(a;x,w)(1-q^j x_i/w_1)\Mid_{w_1=q^j x_i} \text{ for } i=1,\ldots,n_0 \text{ and }j=0,\ldots,a_i-2,
\end{align}
and
\begin{align}\label{Bij}
    B_{ij}=F_{n,n_0}(a;x,w)(1-q^j x_i/w_1)\Mid_{w_1=q^j x_i} \text{ for } i=n_0+1,\ldots,n \text{ and }j=0,\ldots,a_i-1.
\end{align}
The substitution in~\eqref{Aij} yields
\begin{multline}\label{A1}
   A_{ij}
   =\frac{F_{n-1,n_0-1}(a^{(i)};x^{(i)},w^{(1)})}{(q^{-j})_{j}(q)_{a_i-j-2}\prod_{t=2}^s{\big(x_i/w_t\big)_{a_i-1}}} \prod_{l=1}^{i-1}\cfrac{(x_l/x_i)_{a_l-1}(qx_i/x_l)_{a_i-1}}{(q^{-j}x_l/x_i)_{a_l-1}}
   \\  \times\prod_{l=i+1}^{n_0}\cfrac{(x_i/x_l)_{a_i-1}(qx_l/x_i)_{a_l-1}}{(q^{-j}x_l/x_i)_{a_l-1}}\prod_{l=n_0+1}^{n}\cfrac{(x_i/x_l)_{a_i-1}(qx_l/x_i)_{a_l-1}}{(q^{-j}x_l/x_i)_{a_l}}.
\end{multline}
Taking $(i,j,t,y)\mapsto (a_l-1,a_i-1,j,x_i/x_l)$ in \eqref{e-ab}, we have: for $j=0,\dots,a_i-1$,
\begin{subequations}\label{e-B}
\begin{equation}\label{B1}
\frac{(x_l/x_i)_{a_l-1}(qx_i/x_l)_{a_i-1}}{(q^{-j}x_l/x_i)_{a_l-1}}
=q^{j(a_l-1)}\big(q^{2-a_l}x_i/x_l\big)_j\big(q^{j+1}x_i/x_l\big)_{a_i-j-1},
\end{equation}
for $j=-1,\dots,a_i-2$,
\begin{equation}\label{B2}
\frac{(x_i/x_l)_{a_i-1}(qx_l/x_i)_{a_l-1}}{(q^{-j}x_l/x_i)_{a_l-1}}
=q^{(j+1)(a_l-1)}\big(q^{1-a_l}x_i/x_l\big)_{j+1}\big(q^{j+1}x_i/x_l\big)_{a_i-j-2},
\end{equation}
and for $j=0,\dots,a_i-2$
\begin{equation}
\frac{(x_i/x_l)_{a_i-1}(qx_l/x_i)_{a_l-1}}{(q^{-j}x_l/x_i)_{a_l}}=
-q^{j a_l}x_i/x_l(q^{1-a_l}x_i/x_l)_j(q^{j+1}x_i/x_l)_{a_i-j-2} .
\end{equation}
\end{subequations}
 Substituting~\eqref{e-B} into~\eqref{A1} we obtain~\eqref{A}. 

Similarly, carrying out the substitution $w=q^jx_i$ in $F_{n,n_0}(a;x,w)(1-q^jx_i/w)$ for $i=n_0+1,\dots,n$ yields
\begin{multline}\label{BB}
B_{ij}=\frac{F_{n-1,n_0}(a^{(i)};x^{(i)},w^{(1)})}{(q^{-j})_j(q)_{a_i-j-1}}\prod_{l=1}^{n_0}\frac{(x_l/x_i)_{a_l-1}(qx_i/x_l)_{a_i-1}}{(q^{-j}x_l/x_i)_{a_l-1}}\\
\quad\times 
\prod_{l=n_0+1}^{i-1}\frac{(x_l/x_i)_{a_l}(qx_i/x_l)_{a_i}}{(q^{-j}x_l/x_i)_{a_l}}
\prod_{l=i+1}^n\frac{(x_i/x_l)_{a_i}(qx_l/x_i)_{a_l}}{(q^{-j}x_l/x_i)_{a_l}}.
\end{multline}
Taking $a_l \mapsto a_l+1$ and $a_i \mapsto a_i+1$ in \eqref{B1} and \eqref{B2}, respectively, we have
\begin{equation}\label{B3}
\frac{(x_l/x_i)_{a_l}(qx_i/x_l)_{a_i}}{(q^{-j}x_l/x_i)_{a_l}}
=q^{j a_l}\big(q^{1-a_l}x_i/x_l\big)_j\big(q^{j+1}x_i/x_l\big)_{a_i-j}
\quad \text{for $j=0,\dots,a_i$},
\end{equation}
and
\begin{equation}\label{B4}
\frac{(x_i/x_l)_{a_i}(qx_l/x_i)_{a_l}}{(q^{-j}x_l/x_i)_{a_l}}
=q^{(j+1)a_l}\big(q^{-a_l}x_i/x_l\big)_{j+1}\big(q^{j+1}x_i/x_l\big)_{a_i-j-1}
\quad \text{for $j=-1,\dots,a_i-1$}.
\end{equation}
Substituting \eqref{B1}, \eqref{B3} and \eqref{B4} into \eqref{BB} we obtain \eqref{B}.
\end{proof}

\section{Proof of Theorem~\ref{Thm-van}}\label{sec-proof-van}
In this section, we give a proof of Theorem~\ref{Thm-van}. By~\eqref{e-relation}, it suffices to prove that
\[
\underset{x,w}{\CT} x^{-v} w^\lambda F_{n,n_0}(a;x,w)=0,
\]
if there exists an integer $0<j<n$ such that \begin{equation}\label{e-thm-van-2}
        \sum_{k=1}^j v_{i_k}-p_I(n-n_0-j+p_I)<\sum_{k=1}^j \lambda_k
\end{equation}
for every $I=\{i_1,\ldots,i_j\} \subseteq \{1,\ldots,n\}$. Here we recall that $p_I:=\Mid \{ i_k : i_k\leq n_0, 1\leq k\leq j\} \Mid$ for a given $I$.

\begin{proof}[Proof of Theorem~\ref{Thm-van}]
The proof proceeds by induction on $n$. The base case is $n=2$. In this case, $0<j<2$ forces $j=1$, and that \eqref{e-thm-van-2} holds for each $I=\{i\}\subseteq \{1,2\}$ implies that
\[
v_{i}-\chi(i\leq n_0)(2-n_0)<\lambda_1\quad \text{ for $i=1,2$}.
\]
By the splitting formula~\eqref{e-Fsplit} for $F_{2,n_0}(a;x,w)$, we have
\begin{equation}
    x^{-v}w^{\lambda} F_{2,n_0}(a;x,w)=\sum_{i=1}^{n_0}\sum_{j=0}^{a_i-2}\frac{x^{-v}w^{\lambda} A_{ij} }{1-q^jx_i/w_1}+\sum_{i=n_0+1}^{2}\sum_{j=0}^{a_i-1}\frac{x^{-v}w^{\lambda} B_{ij}}{1-q^jx_i/w_1}.
\end{equation}
Taking the constant term with respect to $w_1$ in both sides of the above equation, we obtain 
\[
\underset{w_1}{\CT}x^{-v}w^{\lambda} F_{2,n_0}(a;x,w)=\sum_{i=1}^{n_0} \sum_{j=0}^{a_i-2} (q^{j}x_i)^{\lambda_1} x^{-v} w^{\lambda^{(1)}} A_{ij} + \sum_{i=n_0+1}^{2} \sum_{j=0}^{a_i-1} (q^{j} x_i)^{\lambda_1} x^{-v} w^{\lambda^{(1)}} B_{ij},
\]
where $w^{\lambda^{(1)}}=w_2^{\lambda_2}\cdots w_s^{\lambda_s}$.
Since $A_{ij}$ and $B_{ij}$ are both power series in $x_i$ with lowest  degrees at least $2-n_0$ and $0$, respectively, each summand above vanishes when taking the constant term with respect to $x_i$. Hence $\underset{x,w}{\CT}x^{-v}w^{\lambda} F_{2,n_0}(a;x,w)=0$.

Now assuming the theorem holds for $n-1$, we prove it holds for $n$. Using the splitting formula~\eqref{e-Fsplit} for $F_{n,n_0}(a;x,w)$ and taking the constant term with respect to $w_1$, we obtain
\begin{multline}\label{eq-van-eq1}
    \underset{x,w}{\CT}x^{-v}w^{\lambda} F_{n,n_0}(a;x,w)=\sum_{i=1}^{n_0} \sum_{j=0}^{a_i-2} \underset{x,w}{\CT} (q^{j}x_i)^{\lambda_1} x^{-v}  w^{\lambda^{(1)}} A_{ij} \\ + \sum_{i=n_0+1}^{n} \sum_{j=0}^{a_i-1} \underset{x,w}{\CT} (q^{j} x_i)^{\lambda_1} x^{-v} w^{\lambda^{(1)}} B_{ij}.
\end{multline}
To simplify notation, let
\[
     C_{ij}:=
    \begin{cases}
       A_{ij} & \text{when $i\leq n_0$},\\
       B_{ij} & \text{when $i>n_0$}.
    \end{cases}
\]
Then $C_{ij}$ is a power series in $x_i$. For $i\leq n_0$, its lowest degree in $x_i$ is at least $n-n_0$. For $i>n_0$, its lowest degree in $x_i$ is at least $0$.  
    
For $j=1$, if~\eqref{e-thm-van-2} holds for every $1$-element subset $I=\{i\}\subseteq \{1,\ldots,n\}$, then
    \[
    v_i-\chi(i\leq n_0)(n-n_0)<\lambda_1\quad \text{ for $i=1,\ldots,n$}.
    \]
This gives that each summand $\underset{x,w}{\CT}(q^{j}x_i)^{\lambda_1} x^{-v}  w^{\lambda^{(1)}} C_{ij}$ in \eqref{eq-van-eq1} vanishes by a degree consideration in $x_i$. Hence $\underset{x,w}{\CT}x^{-v}w^\lambda F_{n,n_0}(a;x,w)=0$. Given an integer $j\geq 2$, assume that \eqref{e-thm-van-2} holds for all $j$-elements subsets $I=\{i_1,\ldots,i_j\}\subseteq\{1,\ldots,n\}$. We prove $\underset{x,w}{\CT}x^{-v}w^{\lambda} F_{n,n_0}(a;x,w)=0$ by showing that each summand $\underset{x,w}{\CT} (q^{j}x_i)^{\lambda_1} x^{-v}  w^{\lambda^{(1)}} C_{ij}$ in \eqref{eq-van-eq1} vanishes. It is clear that
\[
  \underset{x_i}{\CT}(q^{j}x_i)^{\lambda_1} x^{-v}  w^{\lambda^{(1)}} C_{ij}= 0
\]
for $v_i < \lambda_1$. Thus, in the subsequent proof we assume $v_i\geq \lambda_1$. By expanding the denominator $\prod_{t=2}^s (x_i/w_t)_{a_i-\chi(i\leq n_0)}$ of $C_{ij}$ as a power series, and taking the constant term with respect to $x_i$, we can write $\underset{x_i}{\CT}(q^{j}x_i)^{\lambda_1} x^{-v}  w^{\lambda^{(1)}} C_{ij}$ as a finite sum of the form
\begin{equation}\label{van-thm-induction-case1}
    c_q \cdot x_1^{-v_1^\prime} \cdots \hat{x_i} \cdots x_n^{-v_n^\prime} \cdot w_2^{\lambda_2^\prime} \cdots w_s^{\lambda_s^\prime} \cdot F_{n-1, n_0 - \chi(i \leq n_0)} (a^{(i)};x^{(i)}, w^{(1)}).
\end{equation}
Here $c_q$ is a constant, $v^\prime = (v_1^\prime, \ldots, \hat{v_i}, \ldots, v_n^\prime)$ and $\lambda^\prime = (\lambda_2^\prime, \ldots, \lambda_s^\prime)$ are integer sequences satisfying
\begin{subequations}\label{v-lambda-prime}
    \begin{equation}\label{v-prime-relation}
         v_k^\prime \geq v_k+\chi(i\leq n_0)\cdot \chi(k > n_0)\quad \text{ for $k\neq i$},
    \end{equation}
    \begin{equation}\label{lambda-prime-relation}
        \lambda_l^{\prime} \leq \lambda_l \quad \text{ for $l=2,\ldots,s$},
    \end{equation}
    and
    \begin{equation}\label{v-lambda-sum}
         \sum_{\substack{k=1\\ k\neq i}}^n v_k^\prime - \sum_{k=2}^s \lambda_k^\prime = \sum_{\substack{k=1\\ k\neq i}}^n v_k - \sum_{k=2}^s \lambda_k+\lambda_1-v_i.
    \end{equation}
\end{subequations}
In the following, we prove $\underset{x,w}{\CT}(q^{j}x_i)^{\lambda_1} x^{-v}  w^{\lambda^{(1)}} C_{ij}=0$ by showing that each summand 
    \begin{equation}\label{eq-proof-thm1-induc-F}
    \CT_{x,w} c_q \cdot x_1^{-v_1^\prime} \cdots \hat{x_i} \cdots x_n^{-v_n^\prime} \cdot w_2^{\lambda_2^\prime} \cdots w_s^{\lambda_s^\prime} \cdot F_{n-1, n_0 - \chi(i \leq n_0)} (a^{(i)};x^{(i)}, w^{(1)})=0.
    \end{equation}
The cases when \(\lambda^\prime\) has negative entries is trivial, so we may assume that \(\lambda^\prime\) is a composition. Furthermore, since the constant term is invariant under permutations of $w$, we may assume without loss of generality that \(\lambda^\prime\) is a partition.

For any fixed $j-1$ elements set $I_1=\{i_1,\ldots,i_{j-1}\} \subset \{1,\ldots,n\}\setminus\{i\}$, by ~\eqref{v-lambda-prime}, we have
    \begin{equation}\label{v-lambda-j-1-subset-inequality}
        \sum_{k=1}^{j-1} (v_k^\prime - v_k)+\sum_{k=2}^{j}(\lambda_k-\lambda_k^\prime)\leq \lambda_1-v_i-\chi(i\leq n_0)(n-n_0-j+1+p_{I_1}).
    \end{equation}
    By \eqref{e-thm-van-2}, ~\eqref{v-lambda-j-1-subset-inequality} and the assumption $v_i\geq \lambda_1$, we have
    \begin{align}\label{e-thm-van-induction}
    &\sum_{k=1}^{j-1}(v_{i_k}^\prime - \lambda_{k+1}^\prime)\\
    & \leq \sum_{k=1}^{j-1} (v_{i_k}-\lambda_{k+1})+\lambda_1-v_i-\chi(i\leq n_0)(n-n_0-j+1+p_{I_1})\nonumber\\
    & \leq \sum_{k=1}^j (v_{i_k}-\lambda_k)-\chi(i\leq n_0)(n-n_0-j+1+p_{I_1})\nonumber\\
    & < (p_{I_1}+\chi(i\leq n_0))(n-n_0-j+p_{I_1}+\chi(i\leq n_0))-\chi(i\leq n_0)(n-n_0-j+1+p_{I_1})\nonumber\\
    & \leq p_{I_1}(n-n_0+\chi(i\leq n_0)-j+p_{I_1}+1).\nonumber
    \end{align}
    Now \eqref{e-thm-van-induction} implies that~\eqref{eq-proof-thm1-induc-F} satisfies the vanishing condition with  
    $n\mapsto n-1$, $n_0\mapsto n_0-\chi(i\leq n_0)$, $(x_1,\ldots,x_{n-1}) \mapsto x^{(i)}$, $v\mapsto v^\prime$, $w\mapsto w^{(1)}$, $\lambda\mapsto\lambda^\prime$, and $j\mapsto j-1$. By the inductive assumption we get~\eqref{eq-proof-thm1-induc-F}. Thereby all the summands $\underset{x,w}{\CT}(q^{j}x_i)^{\lambda_1} x^{-v}  w^{\lambda^{(1)}} C_{ij}$ in ~\eqref{eq-van-eq1} vanish, which implies that $\underset{x,w}{\CT} x^{-v}w^\lambda F_{n,n_0}(a;x,w)=0$. This shows that the theorem holds for $n$. By the induction, the theorem holds.
\end{proof}

\section{An inductive formula for $D_{v,\lambda}(a;n,n_0)$}\label{sec-inductive}

In this section, we use the splitting formula \eqref{e-Fsplit} for $F_{n,n_0}(a;x,w)$
to give an inductive formula for certain $D_{v,\lambda}(a;n,n_0)$.

We first need a simple result deduced from the $q$-binomial theorem.
\begin{prop}\label{prop-sum}
Let $n$ and $t$ be nonnegative integers. Then
\begin{equation}\label{e-sum}
\sum_{k=0}^{t}
\frac{q^{k(n-t)}}
{(q^{-k})_{k}(q)_{t-k}}
=\qbinom{n}{t}.
\end{equation}
\end{prop}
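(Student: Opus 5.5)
The plan is to turn the left-hand side of \eqref{e-sum} into the finite $q$-binomial theorem (Rothe's identity)
\[
\sum_{k=0}^{t}\qbinom{t}{k}(-1)^k q^{\binom{k}{2}}z^k=(z)_t ,
\]
and then specialize $z$.

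\textbf{Step 1: rewrite the denominator.} The first task is to express $(q^{-k})_k$ through $(q)_k$. Since
\[
(q^{-k})_k=\prod_{m=1}^{k}(1-q^{-m})=\prod_{m=1}^{k}\bigl(-q^{-m}\bigr)(1-q^{m})=(-1)^k q^{-\binom{k+1}{2}}(q)_k ,
\]
the $k$-th summand becomes
\[
\frac{q^{k(n-t)}}{(q^{-k})_{k}(q)_{t-k}}
=\frac{(-1)^k q^{\binom{k+1}{2}+k(n-t)}}{(q)_k(q)_{t-k}}.
\]

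\textbf{Step 2: match Rothe's identity.} Multiplying through by $(q)_t$ gives the $q$-binomial coefficient $\qbinom{t}{k}$ in each summand. The exponent splits as
\[
\binom{k+1}{2}+k(n-t)=\binom{k}{2}+k(n-t+1).
\]
Hence
\[
(q)_t\cdot\text{LHS}=\sum_{k=0}^{t}\qbinom{t}{k}(-1)^k q^{\binom{k}{2}}\bigl(q^{n-t+1}\bigr)^k .
\]
Rothe's identity with $z=q^{n-t+1}$ evaluates this sum as $(q^{n-t+1})_t$.

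\textbf{Step 3: conclude.} Dividing by $(q)_t$ gives
\[
\text{LHS}=\frac{(q^{n-t+1})_t}{(q)_t},
\]
which is exactly $\qbinom{n}{t}$ under the paper's definition $\qbinom{n}{k}=(q^{n-k+1})_k/(q)_k$.

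This definition also covers the case $t>n$ with no separate argument. In that case the product $(q^{n-t+1})_t$ contains the factor $1-q^0=0$, so both sides of \eqref{e-sum} equal $0$.

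The only real bookkeeping is the sign and exponent in Step 1. Rothe's identity is standard, for instance as a terminating case of the $q$-binomial theorem in \cite{GR}, so it can be cited. If a self-contained proof is wanted, it follows by induction on $t$ using the $q$-Pascal rule $\qbinom{t+1}{k}=\qbinom{t}{k}+q^{t+1-k}\qbinom{t}{k-1}$.
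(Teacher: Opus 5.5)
Your proposal is correct and follows essentially the same route as the paper: rewrite $(q^{-k})_k=(-1)^kq^{-\binom{k+1}{2}}(q)_k$, factor out $1/(q)_t$, and apply the finite $q$-binomial theorem with $z=q^{n-t+1}$. The paper does exactly this, only more tersely. Your explicit check of the sign and exponent and your remark on the case $t>n$ are correct extra detail.
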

\begin{proof}
We can rewrite the left-hand side of \eqref{e-sum} as
\begin{equation}\label{e-sum2}
\sum_{k=0}^{t}
\frac{(-1)^kq^{k(n-t)+\binom{k+1}{2}}}{(q)_{k}(q)_{t-k}}
=\frac{1}{(q)_t} \sum_{k=0}^{t}q^{\binom{k}{2}}\qbinom{t}{k}(-q^{n-t+1})^k.
\end{equation}
Using the well-known $q$-binomial theorem \cite[Theorem 3.3]{andrew-qbinomial}, we obtain
\[
(z)_t=\sum_{k=0}^tq^{\binom{k}{2}}\qbinom{t}{k}(-z)^k.
\]
Substituting $z\mapsto q^{n-t+1}$, we have
\[
\frac{1}{(q)_t} \sum_{k=0}^{t}q^{\binom{k}{2}}\qbinom{t}{k}(-q^{n-t+1})^k
=(q^{n-t+1})_t/(q)_t=\qbinom{n}{t}. \qedhere
\]
\end{proof}

By the splitting formula~\eqref{e-Fsplit} for $F_{n,n_0}(a;x,w)$, we can obtain the following inductive formula for $D_{v,\lambda}(a;n,n_0)$.
\begin{lem}\label{lem-ind}
Let $v=(v_1,\dots,v_n)\in \mathbb{Z}^n$ and $\lambda$ be a partition such that $|\lambda|=|v|$. Denote $r:=\max(\{v_i-n+n_0:i\leq n_0\}\cup\{v_i:i\geq n_0+1\})$, $S_1:=\{i\in\{1,\ldots,n_0\}:v_i-n+n_0=r\}$ and $S_2:=\big\{i\in \{n_0+1,\dots,n\}: v_i=r\big\}$. If $\lambda_1= r$, then
\begin{multline}\label{e-ind}
D_{v,\lambda}(a;n,n_0)=\sum_{i\in S_1} (-1)^{n-n_0} q^{\sum_{l=i+1}^{n_0} (a_l-1)}\qbinom{|a|+r-1-n_0}{a_{i}-2}D_{(v+\alpha)^{(i)},\lambda^{(1)}}\big(a^{(i)};n-1,n_0-1\big)\\
+ \sum_{i\in S_2} q^{\sum_{l=i+1}^{n} a_l}\qbinom{|a|+r-1-n_0}{a_{i}-1}D_{v^{(i)},\lambda^{(1)}}\big(a^{(i)};n-1,n_0\big),
\end{multline}
where $\alpha=(\underset{n_0}{\underbrace{0,\dots,0}},\underset{n-n_0}{\underbrace{1,\dots,1}}).$
\end{lem}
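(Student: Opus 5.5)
Start from \eqref{e-relation}, $D_{v,\lambda}(a;n,n_0)=\CT_{x,w}x^{-v}w^{\lambda}F_{n,n_0}(a;x,w)$. Insert the splitting formula \eqref{e-Fsplit} and take the constant term in $w_1$, exactly as in \eqref{eq-van-eq1}. Each summand then becomes $\CT_{x,w}(q^jx_i)^{\lambda_1}x^{-v}w^{\lambda^{(1)}}C_{ij}$.

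Set $\lambda_1=r$. The first step is a degree argument in $x_i$. For $i\le n_0$, $A_{ij}$ is a power series in $x_i$ of lowest degree at least $n-n_0$. Its lowest-degree part comes only from the factor $\prod_{l>n_0}(-x_i/x_l)$, with every other factor evaluated at $x_i=0$. Since $v_i-r\le n-n_0$, with equality iff $i\in S_1$, the summand vanishes unless $i\in S_1$. When $i\in S_1$, only this lowest-order term survives. Similarly, for $i>n_0$, $B_{ij}$ has lowest degree $0$ and $v_i\le r$ with equality iff $i\in S_2$. In the surviving cases we set $x_i=0$ in everything else. This kills every $(c x_i/x_l)_k$ factor, and $\prod_{t\ge2}(x_i/w_t)^{-1}_{a_i-\chi}$ becomes $1$.

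The extracted factor $\prod_{l>n_0}(-1/x_l)=(-1)^{n-n_0}x^{-\alpha}$ in the $A$-case shifts $v$ to $(v+\alpha)^{(i)}$. What remains is $\CT x^{-(v+\alpha)^{(i)}}w^{\lambda^{(1)}}F_{n-1,n_0-1}(a^{(i)};x^{(i)},w^{(1)})$, which by \eqref{e-relation} is $D_{(v+\alpha)^{(i)},\lambda^{(1)}}(a^{(i)};n-1,n_0-1)$. In the $B$-case it is $D_{v^{(i)},\lambda^{(1)}}(a^{(i)};n-1,n_0)$. Here $|\lambda^{(1)}|=|(v+\alpha)^{(i)}|$ holds because $v_i-r=n-n_0$, so homogeneity is consistent.

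The second step is the $j$-sum of scalar prefactors. For $i\in S_1$ it is
\[
q^{\sum_{l=i+1}^{n_0}(a_l-1)}\sum_{j=0}^{a_i-2}\frac{q^{j(|a|-a_i-n_0+1)}q^{jr}}{(q^{-j})_j(q)_{a_i-2-j}}.
\]
Apply Proposition~\ref{prop-sum} with $t=a_i-2$ and $n-t=|a|-a_i-n_0+1+r$, that is, $n=|a|+r-1-n_0$. This gives $\qbinom{|a|+r-1-n_0}{a_i-2}$. For $i\in S_2$, use $t=a_i-1$ and exponent $|a|-a_i-n_0+r$, which again gives $n=|a|+r-1-n_0$ and so $\qbinom{|a|+r-1-n_0}{a_i-1}$. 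Collecting the terms yields \eqref{e-ind}.

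The main obstacle is making the specialization $x_i=0$ rigorous. Each surviving term must be a genuine series in $x_i$ whose coefficients are the $F_{n-1,\cdot}$ expressions, with the expansions in $w$ compatible. I would handle this by noting that every $x_i$-dependent factor other than the extracted monomial is a polynomial or geometric-series factor whose constant term in $x_i$ is $1$. I would also check the sign and power bookkeeping: the prefactor $q^{j\cdot(\ldots)}$ from \eqref{A} and \eqref{B} combined with $q^{j\lambda_1}$.
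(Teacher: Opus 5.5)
Your proposal is correct and follows the paper's proof essentially step for step. The shared argument substitutes \eqref{e-Fsplit} into \eqref{e-relation} and takes $\CT_{w_1}$; it then uses the lowest $x_i$-degrees of $A_{ij}$ and $B_{ij}$ to discard every $i\notin S_1\cup S_2$ and reduce the survivors to $(x_i^{-(n-n_0)}A_{ij})|_{x_i=0}$ and $B_{ij}|_{x_i=0}$, and finally collapses the $j$-sums via Proposition~\ref{prop-sum} with $t=a_i-2$, respectively $t=a_i-1$, and upper index $|a|+r-1-n_0$. Your remark on making the specialization $x_i=0$ rigorous is handled only implicitly in the paper, and your justification (every $x_i$-dependent factor other than the extracted monomial has constant term $1$ in $x_i$) is sufficient.
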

\begin{proof}
Substituting the splitting formula \eqref{e-Fsplit} for $F_{n,n_0}(a;x,w)$ into~\eqref{e-relation}, 
we have
\[
D_{v,\lambda}(a;n,n_0)=\sum_{i=1}^{n_0}\sum_{j=0}^{a_i-2}\CT_{x,w}
\frac{x^{-v}w^{\lambda}A_{ij}}{1-q^jx_i/w_1}
+\sum_{i=n_0+1}^{n}\sum_{j=0}^{a_i-1}\CT_{x,w}
\frac{x^{-v}w^{\lambda}B_{ij}}{1-q^jx_i/w_1},
\]
where $w=(w_1,\dots,w_n)$.
Taking the constant term with respect to $w_1$ yields
\begin{align}\label{e-ind1}
D_{v,\lambda}(a;n,n_0)
 &=\sum_{i=1}^{n_0}\sum_{j=0}^{a_i-2}\CT_{x,w^{(1)}}q^{j\lambda_1}x^{-v}x_i^{\lambda_1}w^{\lambda^{(1)}}A_{ij}
 +\sum_{i=n_0+1}^{n}\sum_{j=0}^{a_i-1}\CT_{x,w^{(1)}}q^{j\lambda_1}x^{-v}x_i^{\lambda_1}w^{\lambda^{(1)}}B_{ij}\\
 &\nonumber=\sum_{i=1}^{n_0}\sum_{j=0}^{a_i-2}\CT_{x,w^{(1)}}q^{jr}x^{-v}x_i^{r}w^{\lambda^{(1)}}A_{ij}
 +\sum_{i=n_0+1}^{n}\sum_{j=0}^{a_i-1}\CT_{x,w^{(1)}}q^{jr}x^{-v}x_i^{r}w^{\lambda^{(1)}}B_{ij},
\end{align}
where $w^{\lambda^{(1)}}=w_2^{\lambda_2}
\cdots w_n^{\lambda_n}$. Recall that $A_{ij}$ and $B_{ij}$ are power series in $x_i$ with lowest degrees at least $n-n_0$ and $0$, respectively. It follows that:
\[
    \CT_{x,w^{(1)}}q^{jr}x^{-v}x_i^rw^{\lambda^{(1)}}A_{ij}=0 \quad \text{for $1\leq i\leq n_0$ and $i \notin S_1$,}
\]
\[
\CT_{x,w^{(1)}}q^{jr}x^{-v}x_i^rw^{\lambda^{(1)}}B_{ij}=0 \quad \text{for $n_0< i\leq n$ and $i \notin S_2$,}
\]
\[
    \CT_{x,w^{(1)}}q^{jr}x^{-v}x_i^rw^{\lambda^{(1)}}A_{ij}=\CT_{x^{(i)},w^{(1)}}q^{jr}x^{-v^{(i)}}w^{\lambda^{(1)}}(x_i^{-n+n_0} A_{ij})|_{x_i=0}
\quad \text{for $i \in S_1$,}
\]
and
\[\CT_{x,w^{(1)}}q^{jr}x^{-v}x_i^rw^{\lambda^{(1)}}B_{ij}=\CT_{x^{(i)},w^{(1)}}q^{jr}x^{-v^{(i)}}w^{\lambda^{(1)}}B_{ij}|_{x_i=0}
\quad \text{for $i\in S_2$},
\]
where $x^{-v^{(i)}}=x_1^{-v_1}\cdots x_{i-1}^{-v_{i-1}}x_{i+1}^{-v_{i+1}}\cdots x_{n}^{-v_n}$.
Hence~\eqref{e-ind1} reduces to
\begin{multline*}
D_{v,\lambda}(a;n,n_0)
=\sum_{i\in S_1}\sum_{j=0}^{a_i-2}\CT_{x^{(i)},w^{(1)}}
q^{jr}x^{-v^{(i)}}w^{\lambda^{(1)}}(x_i^{-n+n_0}A_{ij})|_{x_i=0}
\\+\sum_{i\in S_2}\sum_{j=0}^{a_i-1}\CT_{x^{(i)},w^{(1)}}
q^{jr}x^{-v^{(i)}}w^{\lambda^{(1)}}B_{ij}|_{x_i=0}.
\end{multline*}
Using the expressions for $A_{ij}$ and $B_{ij}$ in~\eqref{A} and~\eqref{B}, respectively, we obtain
\begin{multline*}
D_{v,\lambda}(a;n,n_0)=\sum_{i\in S_2}\sum_{j=0}^{a_i-1}\frac{q^{j(|a|-a_i-n_0+r)+\sum_{l=i+1}^{n}a_l}}{(q^{-j})_j(q)_{a_i-j-1}}
\CT_{x^{(i)},w^{(1)}}x^{-v^{(i)}}w^{\lambda^{(1)}}F_{n-1,n_0}(a^{(i)};x^{(i)},w^{(1)})\\
+\sum_{i\in S_1}\sum_{j=0}^{a_i-2}
\frac{q^{j(|a|-a_i-n_0+r+1)+\sum_{l=i+1}^{n_0}(a_l-1)}}{(-1)^{n-n_0}(q^{-j})_{j}(q)_{a_i-j-2}}
\CT_{x^{(i)},w^{(1)}}x^{-(v+\alpha)^{(i)}}w^{\lambda^{(1)}}F_{n-1,n_0-1}(a^{(i)};x^{(i)},w^{(1)}).
\end{multline*}
Using~\eqref{e-sum} with $(k,t,n)\mapsto(j,a_i-1,|a|+r-1-n_0)$ and $(k,t,n)\mapsto(j,a_i-2,|a|+r-1-n_0)$, respectively, we have
\begin{multline*}
D_{v,\lambda}(a;n,n_0)=\sum_{i\in S_2}q^{\sum_{l=i+1}^n a_l} \qbinom{|a|+r-1-n_0}{a_i-1}
\CT_{x^{(i)},w^{(1)}}x^{-v^{(i)}}w^{\lambda^{(1)}}F_{n-1,n_0}(a^{(i)};x^{(i)},w^{(1)})\\
+\sum_{i\in S_1} \bigg((-1)^{n-n_0} q^{\sum_{l=i+1}^{n_0} (a_l-1)}\qbinom{|a|+r-1-n_0}{a_{i}-2}\\\times
\CT_{x^{(i)},w^{(1)}}x^{-(v+\alpha)^{(i)}}w^{\lambda^{(1)}}F_{n-1,n_0-1}(a^{(i)};x^{(i)},w^{(1)})\bigg).
\end{multline*}
By \eqref{e-relation} again, the constant term in the first sum equals to $D_{v^{(i)},\lambda^{(1)}}\big(a^{(i)};n-1;n_0\big)$, while that in the second sum equals to $D_{(v+\alpha)^{(i)},\lambda^{(1)}}\big(a^{(i)};n-1,n_0-1\big)$. This completes the proof.
\end{proof}

\section{Proof of Theorem~\ref{p-recu}}\label{sec-proof}

In this section, we give a proof of Theorem~\ref{p-recu} by the inductive formula~\eqref{e-ind}.

We first need the following result, which is essentially given by Zhou \cite[Proposition 5.4]{Zhou-JCTA}.
\begin{prop}\label{prop-eq}
Let $I$ be a non-empty subset of $\{1,2,\dots,m\}$ of cardinality at least two. Then for an integer $r$,
\begin{multline}\label{e-transsum}
\sum_{i\in I}\sum_{\emptyset \neq J\subseteq I\setminus \{i\}} (-1)^{|J|+1}q^{\sum_{j=i+1}^ma_j+L_{m,I\setminus\{i\},J}(a^{(i)})}
\frac{(1-q^{a_i})(1-q^{a_{J}})}{(1-q^{|a|-a_i+r})(1-q^{|a|-a_i-a_{J}+r})}\\
=\sum_{\emptyset \neq J\subseteq I}(-1)^{|J|}q^{L_{m,I,J}(a)}
\frac{1-q^{a_{J}}}{1-q^{|a|-a_{J}+r}},
\end{multline}
where $a_S:=\sum_{j\in S}a_j$ and $L_{m,I,J}(a)$ is defined as in \eqref{def-L}.
\end{prop}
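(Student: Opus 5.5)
I would prove \eqref{e-transsum} by treating $r$ as a continuous parameter. Put $z:=q^{r}$ and regard both sides as rational functions of $z$, with $a$ and $q$ generic. Every denominator has the form $1-q^{|a|-a_S}z$ for a non-empty $S\subseteq I$. Each summand is a ratio whose numerator is constant in $z$, and the degree in $z$ of its numerator is smaller than that of its denominator. Hence both sides tend to $0$ as $z\to\infty$, and both have only simple poles, located at $z=q^{a_S-|a|}$. Two such functions agree as soon as their residues agree at every pole. So the plan is a residue comparison at each non-empty $K\subseteq I$, assuming the $a_S$ are pairwise distinct and then extending by continuity or by polynomial identity in the $q^{a_j}$. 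Integrality of $r$ plays no role.

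\emph{Poles with $|K|\ge 2$.} On the right, only $J=K$ contributes: the residue is $(-1)^{|K|}q^{L_{m,I,K}(a)}(1-q^{a_K})$, up to the common normalisation factor. On the left, the factor $1-q^{|a|-a_i-a_J}z$ vanishes exactly when $J=K\setminus\{i\}$ with $i\in K$. At $z=q^{a_K-|a|}$ the other denominator becomes $1-q^{a_K-a_i}=1-q^{a_J}$, which cancels the numerator factor $1-q^{a_J}$. The residue of the left side is therefore
\[
(-1)^{|K|}\sum_{i\in K}q^{\sum_{j>i}a_j+L_{m,I\setminus\{i\},K\setminus\{i\}}(a^{(i)})}(1-q^{a_i}).
\]
I would then compute the exponent explicitly. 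In original labels, $L_{m,I\setminus\{i\},K\setminus\{i\}}(a^{(i)})$ is the sum of $a_j$ over pairs $k\le j$ with $k\in I\setminus\{i\}$, $j\notin K$, $j\ne i$. Writing $K=\{k_1<\dots<k_p\}$, the claim is that the exponent equals $L_{m,I,K}(a)+\sum_{t:\,k_t>i}a_{k_t}$. The sum over $i\in K$ then telescopes:
\[
\sum_{s=1}^{p}q^{a_{k_{s+1}}+\dots+a_{k_p}}(1-q^{a_{k_s}})=1-q^{a_K},
\]
which matches the right side. Checking this exponent bookkeeping, including the extra $a_j$ with $j>i$, $j\notin K$ that come from $\sum_{j>i}a_j$ and the pairs with $k=i$, is routine but fiddly.

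\emph{Poles with $K=\{i\}$.} Here only the first denominator $1-q^{|a|-a_i}z$ vanishes, and it does so for every $J\subseteq I\setminus\{i\}$. The residue on the left is a signed sum over all non-empty $J$, in which $(1-q^{a_J})/(1-q^{-a_J})$ collapses to $-q^{a_J}$. This leaves
\[
(1-q^{a_i})\sum_{\emptyset\ne J\subseteq I\setminus\{i\}}(-1)^{|J|}q^{a_J+(\text{exponent})},
\]
which must equal the right-hand residue $-q^{L_{m,I,\{i\}}(a)}(1-q^{a_i})$. The exponents are linear in the indicators of $J$. After factoring, I expect the sum to become
\[
q^{c}\left(\prod_{j\in I\setminus\{i\}}\bigl(1-q^{e_j}\bigr)-1\right),
\]
and the product to vanish because some $e_j=0$. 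Identifying that vanishing factor, presumably the element of $I\setminus\{i\}$ adjacent to $i$ in the ordering, is the step I expect to be the main obstacle. If it fails, I would fall back on induction on $|I|$, with base case $|I|=2$ checked directly.

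Since both sides vanish at infinity and have the same principal parts, they are equal.
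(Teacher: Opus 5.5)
Your argument is correct, and it differs from the paper, which gives no proof of this identity and simply imports it from Zhou's Proposition~5.4. Your comparison of principal parts in $z=q^r$ is therefore a self-contained substitute. To make it airtight, treat $x_j=q^{a_j}$ ($j\le m$) and $q^{|a|}z$ as independent indeterminates. Then the monomials $x_S$ are automatically distinct, no continuity argument is needed, and you specialize at the end. The citation keeps the paper short, but it leaves the reader to reconcile Zhou's notation with the mixed labelling in $L_{m,I\setminus\{i\},J}(a^{(i)})$. Your route is independent of that, and it shows the identity holds for non-integral $r$ as well.

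Both exponent computations you defer follow from one counting fact. Read $L_{m,I\setminus\{i\},J}(a^{(i)})$ as you do, which is also the reading the paper's induction requires: $L_{m,I\setminus\{i\},J}(a^{(i)})=\sum_{j\le m,\,j\ne i,\,j\notin J}a_jN_i(j)$, where $N_i(j):=\#\{k\in I\setminus\{i\}:k\le j\}$. For $j\neq i$ one has $\#\{k\in I:k\le j\}=N_i(j)+\chi(i<j)$. For $i\in K$ this gives $L_{m,I,K}(a)=L_{m,I\setminus\{i\},K\setminus\{i\}}(a^{(i)})+\sum_{i<j\le m,\,j\notin K}a_j$, which is exactly your exponent claim. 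The telescoping then settles the poles with $|K|\ge 2$.

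The step you flagged does close, but your guess for the vanishing factor is off. At $K=\{i\}$ the same counting fact gives $a_J+\sum_{i<j\le m}a_j+L_{m,I\setminus\{i\},J}(a^{(i)})=c+\sum_{j\in J}e_j$, with $c=L_{m,I,\{i\}}(a)$ and $e_j=a_j\bigl(1-N_i(j)\bigr)$. So $e_j=0$ exactly for $j=\min(I\setminus\{i\})$, and this element need not be adjacent to $i$. For example, with $I=\{1,2,3\}$ and $i=3$, the cancelling pair is $J=\{2\}$ and $J=\{1,2\}$, so the vanishing factor is $j=1$. Hence the sum equals $-q^{L_{m,I,\{i\}}(a)}$, and the left residue is $-(1-q^{a_i})q^{L_{m,I,\{i\}}(a)}$, as required. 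The fallback induction on $|I|$ is unnecessary.
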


\begin{proof}[Proof of Theorem~\ref{p-recu}]
   We prove the first case of the theorem by induction on $s_1$. Note that in this case $S_2=\emptyset$. For $s_1=1$, let $k$ be the unique element of $S_1$. By \eqref{e-ind} we have 
    \begin{equation}\label{e-recu-A1}
        D_{v,\lambda}(a;n,n_0)=(-1)^{n-n_0} q^{\sum_{l=k+1}^{n_0} (a_l-
        1)}\qbinom{|a|+r-1-n_0}{a_{k}-2}
        D_{(v+\alpha)^{(k)},\lambda^{(1)}}\big(a^{(k)};n-1,n_0-1\big).
    \end{equation}
    It is not hard to verify that the right-hand side of \eqref{e-recu-A1} equals the $s_1=1$ case of \eqref{e-recu-A}. 
    Suppose $2\leq s_1\leq n$. By \eqref{e-ind} and the induction 
    hypothesis, we find 
    \begin{equation}\label{e-recu-A2}\begin{aligned}
        D_{v,\lambda}(a;n,n_0)&=(-1)^{s_1(n-n_0)}
        \sum_{i\in S_1}q^{\sum_{l=i+1}^{n_0}(a_l-1)}
        \qbinom{|a|+r-1-n_0}{a_{i}-2}\frac{\qbinom{|\widetilde{a}|-\widetilde{a}_i+r-1}{\widetilde{a}^{(i)},r-1}}{\qbinom{|\widetilde{a}|-\widetilde{a}_{S_1}+r-1}{\widetilde{a}^{(S_1)},r-1}}
        \\&\quad\times
        \sum_{\emptyset \neq J\subseteq S_1\setminus \{i\}}(-1)^{|S_1\setminus  J|-1}q^{L_{n_0,S_1\setminus\{i\},J}(\widetilde{a}^{(i)})}
        \frac{1-q^{\widetilde{a}_{J}}}{1-q^{|\widetilde{a}|-\widetilde{a}_i-\widetilde{a}_{J}+r}}
        \\&\quad\times D_{(v+s_1\alpha)^{(S_1)},\lambda^{([s_1])}}\big(a^{(S_1)};n-s_1,n_0-s_1\big)
        .
    \end{aligned}\end{equation}
    It is easy to check that for $1\leq i \leq n$
    \begin{equation}\label{e-cal-q-bino}
        \qbinom{|a|+r-1-n_0}{{a}_i-2}\qbinom{|\widetilde{a}|-\widetilde{a}_i+r-1}{\widetilde{a}^{(i)},r-1}=\frac{1-q^{\widetilde{a}_i}}{1-q^{|\widetilde{a}|-\widetilde{a}_i+r}}\qbinom{|\widetilde{a}|+r-1}{\widetilde{a},r-1}.
    \end{equation}
     Therefore, we can rewrite \eqref{e-recu-A2} as 
    \begin{multline*}
        D_{v,\lambda}(a;n,n_0)=\sum_{i\in S_1}\sum_{\emptyset \neq J\subseteq S_1\setminus\{i\}}(-1)^{|J|+s_1(n-n_0+1)+1}q^{\sum_{l=i+1}^{n_0}\widetilde{a_l}+L_{n_0,S_1\setminus\{i\},J}(\widetilde{a}^{(i)})}
        \frac{\qbinom{|\widetilde{a}|+r-1}{\widetilde{a},r-1}}{\qbinom{|\widetilde{a}|-\widetilde{a}_{S_1}+r-1}{\widetilde{a}^{(S_1)},r-1}}
        \\\times\frac{(1-q^{\widetilde{a}_{J}})(1-q^{\widetilde{a}_i})}{(1-q^{|\widetilde{a}|-\widetilde{a}_i-\widetilde{a}_{J}+r})(1-q^{|\widetilde{a}|-\widetilde{a}_i+r})}
        D_{(v+|S_1\alpha|)^{(S_1)},\lambda^{([s_1])}}\big(a^{(S_1)};n-s_1,n_0-s_1\big)
        .
    \end{multline*}
    Applying~\eqref{e-transsum} with $(I,m,a)\mapsto (S_1,n_0,\widetilde{a})$ to the above identity, we obtain~\eqref{e-recu-A}.

    The proof of the second case is similar to that of the first case. Note that in this case, $S_1=\emptyset$.  We proceed by induction on $s_2$. For $s_2=1$, let $k$ be the unique element in $S_2$. By \eqref{e-ind}, we have
    \begin{equation}\label{e-recu-B1}
    D_{v,\lambda}(a;n,n_0)= q^{\sum_{l=k+1}^{n} a_l}\qbinom{|a|+r-1-n_0}{a_{k}-1}D_{v^{(k)},\lambda^{(1)}}\big(a^{(k)};n-1,n_0\big).
    \end{equation}
    It is not hard to verify that the right-hand side of \eqref{e-recu-B1} equals the $s_2=1$ case of \eqref{e-recu-B}.
    Suppose $2 \leq s_2\leq n$. By \eqref{e-ind} and the induction hypothesis, we find 
     \begin{multline}\label{e-recu-B2}
         D_{v,\lambda}(a;n,n_0)=\sum_{i\in S_2}q^{\sum_{l=i+1}^na_l}\qbinom{|a|+r-1-n_0}{a_i-1}\frac{\qbinom{|\widetilde{a}|-\widetilde{a}_i+r-1}{\widetilde{a}^{(i)},r-1}}{\qbinom{|\widetilde{a}|-\widetilde{a}_{S_2}+r-1}{\widetilde{a}^{(S_2)},r-1}}
         \\\times\sum_{\emptyset\neq J \subseteq S_2\setminus\{i\}}(-1)^{|S_2\setminus J|-1}q^{L_{n,S_2\setminus \{i\},J}(\widetilde{a}^{(i)})}\frac{1-q^{\widetilde{a}_{J}}}{1-q^{|\widetilde{a}|-\widetilde{a}_i-\widetilde{a}_{J}+r}} D_{v^{(S_2)},\lambda^{([s_2])}}(a^{(S_2);n-s_2,n_0}).
     \end{multline}
     Using \eqref{e-cal-q-bino} again, we can rewrite \eqref{e-recu-B2} as
\begin{multline*}
    D_{v,\lambda}(a;n,n_0)=\sum_{i\in S_2}\sum_{\emptyset\neq J \subseteq S_2\setminus\{i\}}(-1)^{|J|+1}q^{\sum_{l=i+1}^n\widetilde{a_l}+L_{n,S_2\setminus \{i\},J}(\widetilde{a}^{(i)})}
    (-1)^{s_2}\frac{\qbinom{|\widetilde{a}|+r-1}{\widetilde{a},r-1}}
{\qbinom{|\widetilde{a}|-\widetilde{a}_{S_2}+r-1}{\widetilde{a}^{(S_2)},r-1}}
    \\\times\frac{(1-q^{\widetilde{a_i}})(1-q^{\widetilde{a}_J})}{(1-q^{|\widetilde{a}|-\widetilde{a}_i+r})(1-q^{|\widetilde{a}|-\widetilde{a}_i-\widetilde{a}_J+r})}D_{v^{(S_2)},\lambda^{([s_2])}}(a^{(S_2);n-s_2,n_0}).
\end{multline*}
Applying~\eqref{e-transsum} with $(I,m,a)\mapsto (S_2,n_0,\widetilde{a})$ to the above identity, we obtain~\eqref{e-recu-A}.
\end{proof}

\section{Other forms of Theorem~\ref{Thm-van} and  Corollary~\ref{Thm-rec}}\label{Schur}
In this section, we show that in Theorem~\ref{Thm-van} and  Corollary~\ref{Thm-rec}, the complete symmetric functions $h_{\lambda}$ in $D_{v,\lambda}(a;n,n_0)$ can be replaced by the Schur functions $s_{\lambda}$. Denote 
\begin{equation*}
    D_{v,\lambda}^{\prime}(a;n,n_0)=\CT_{x}
x^{-v}s_{\lambda}\big(X^{(a;n_0)}\big)
\prod_{1\leq i<j\leq n}
(x_i/x_j)_{a_i-\chi(i\leq n_0)}(qx_j/x_i)_{a_j-\chi(i\leq n_0)},
\end{equation*}
where
\[
s_{\lambda}(x):=\frac{\det\big(x_i^{\lambda_i+n-j}\big)_{1\leq i, j \leq n}}{\prod_{1\leq i < j \leq n}(x_i-x_j)}
\]
is the Schur function for the partition $\lambda$ with length at most $n$.
We find that $D_{v,\lambda}^{\prime}(a;n,n_0)$ satisfies the same vanishing condition as $D_{v,\lambda}(a;n,n_0)$ in Theorem~\ref{Thm-van}.
\begin{prop}\label{Thm-van-Schur}
    Let $v \in \mathbb{Z}^{n}$ and $\lambda$ be a partition such that $|v|=|\lambda|$. Then $D_{v,\lambda}^{\prime}(a;n,n_0)=0$, if there exists an integer $0 < j < n$, such that 
    \begin{equation}\label{e-thm-van-Schur}
        \sum_{k=1}^j v_{i_k}-p_I(n-n_0-j+p_I)<\sum_{k=1}^j \lambda_k \quad \text{ for every 
        $I=\{i_1,\ldots,i_j\} \subseteq \{1,\ldots,n\}$},
    \end{equation}
    where $p_I:=\Mid \{ i_k : i_k\leq n_0, 1\leq k\leq j\} \Mid$ for a given $I$.
\end{prop}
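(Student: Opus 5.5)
The plan is to reduce the Schur case to the complete symmetric function case, Theorem~\ref{Thm-van}, through the Jacobi--Trudi identity
\[
s_{\lambda}=\det\big(h_{\lambda_i-i+j}\big)_{1\leq i,j\leq \ell},\qquad \ell=\ell(\lambda).
\]
Expanding the determinant writes $s_\lambda$ as a signed sum, over permutations $\sigma$ of $[\ell]$, of products $h_{\mu^\sigma}$, where $\mu^\sigma_i=\lambda_i-i+\sigma(i)$ (with $h_0=1$ and $h_m=0$ for $m<0$). By linearity of the constant term,
\[
D'_{v,\lambda}(a;n,n_0)=\sum_{\sigma}\operatorname{sgn}(\sigma)\,D_{v,(\mu^\sigma)^+}(a;n,n_0),
\]
where $(\mu^\sigma)^+$ is the partition obtained by sorting the nonnegative entries of $\mu^\sigma$. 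Terms with a negative entry vanish. For the remaining terms $|(\mu^\sigma)^+|=|\lambda|=|v|$, so each is a constant term of the type treated in Theorem~\ref{Thm-van}.

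The key step is to show that every partition $\mu:=(\mu^\sigma)^+$ appearing dominates $\lambda$, i.e.\ $\sum_{k=1}^j\mu_k\geq\sum_{k=1}^j\lambda_k$ for all $j$. This is the standard fact that $h_{\mu}$ with $\mu\geq\lambda$ are the only terms in the Jacobi--Trudi expansion. To check it, note that the first $j$ largest entries of $\mu^\sigma$ have sum at least $\sum_{i=1}^{j}(\lambda_i-i+\sigma(i))$. The set $\{\sigma(1),\dots,\sigma(j)\}$ consists of $j$ distinct positive integers, so $\sum_{i\le j}\sigma(i)\geq \sum_{i\le j} i$. Hence $\sum_{k\le j}\mu_k\ge\sum_{k\le j}\lambda_k$. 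Entries equal to zero only help here, since sorting in decreasing order and taking top-$j$ sums dominates any $j$ entries.

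Given this, fix the $j$ for which \eqref{e-thm-van-Schur} holds for every $j$-subset $I$. Then for each $\mu$,
\[
\sum_{k=1}^j v_{i_k}-p_I(n-n_0-j+p_I)<\sum_{k=1}^j\lambda_k\leq\sum_{k=1}^j\mu_k,
\]
so \eqref{e-thm-van} holds with $\lambda$ replaced by $\mu$. Theorem~\ref{Thm-van} then gives $D_{v,\mu}(a;n,n_0)=0$ for every term, and therefore $D'_{v,\lambda}(a;n,n_0)=0$.

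The main point needing care is bookkeeping rather than difficulty. If $\ell(\lambda)$ exceeds the number of variables, the Schur function is defined through the determinant over the alphabet. Using Jacobi--Trudi as an identity of symmetric functions, specialized to the finite alphabet $X^{(a;n_0)}$, avoids this issue. The other thing to confirm is that the reordered sequence still satisfies the dominance inequality after zeros are dropped, which the top-$j$ argument above handles.
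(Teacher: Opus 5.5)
Your proposal is correct and follows the same route as the paper: expand $s_\lambda$ by Jacobi--Trudi as $h_\lambda+\sum_{\mu>\lambda}c_\mu h_\mu$, then show each $D_{v,\mu}(a;n,n_0)$ vanishes by Theorem~\ref{Thm-van}. You also spell out two steps the paper leaves implicit: the dominance check, and the transfer of the hypothesis via $\sum_{k\le j}\lambda_k\le\sum_{k\le j}\mu_k$.
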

\begin{proof}
    Using the Jacobi--Trudi identity \cite[Equation 3.4, Chapter 1]{Mac95}
    \begin{equation}\label{e-Jacobi}
        s_{\lambda}=\det(h_{\lambda-i+j})_{1\leq i, j \leq n}=h_{\lambda}+\sum_{\mu > \lambda}c_{\mu}h_{\mu},
    \end{equation}
    where $n \geq \ell(\lambda)$, we have 
    \[
    D_{v,\lambda}^{\prime}(a;n,n_0)=D_{v,\lambda}(a;n,n_0) + \sum_{\mu > \lambda}c_\mu D_{v,\mu}(a;n,n_0).
    \]
    By Theorem~\ref{Thm-van}, every summand in the right hand side of the equation above vanishes. Hence $D_{v,\lambda}^{\prime}(a;n,n_0)=0$.
\end{proof}
We can also obtain the following equivalence between $D_{v,v^+}^{\prime}(a;n,n_0)$ and $D_{v,v^+}(a;n,n_0)$. 
\begin{prop}
    \begin{equation}\label{e-equavelent}
    D_{v,v^+}^{\prime}(a;n,n_0)=D_{v,v^+}(a;n,n_0).
\end{equation}
\end{prop}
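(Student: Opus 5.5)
The plan is to expand $s_{v^+}$ via the Jacobi--Trudi identity \eqref{e-Jacobi}, exactly as in the proof of Proposition~\ref{Thm-van-Schur}. With $\lambda=v^+$ this gives
\[
D'_{v,v^+}(a;n,n_0)=D_{v,v^+}(a;n,n_0)+\sum_{\mu>v^+}c_\mu D_{v,\mu}(a;n,n_0).
\]
The whole task is then to show that each extra term $D_{v,\mu}(a;n,n_0)$ with $\mu>v^+$ vanishes.

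The key step uses Corollary~\ref{Cor-van}, which states that $D_{v,\mu}(a;n,n_0)=0$ whenever $\mu\nleq v^+$. Any $\mu>v^+$ in dominance order satisfies $\mu\ge v^+$ and $\mu\neq v^+$. Since dominance order is antisymmetric on partitions of the same size, $\mu\le v^+$ would force $\mu=v^+$, which is excluded. Hence $\mu\nleq v^+$ and every extra term is zero, leaving $D'_{v,v^+}=D_{v,v^+}$.

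Two points need checking, and I expect the second to be the main obstacle.

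First, the size condition. The identity is only meaningful when $|v^+|=|v|$, which always holds. Every $\mu$ appearing in the expansion has $|\mu|=|v^+|=|v|$, so Corollary~\ref{Cor-van} applies to it.

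Second, the precise form of the Jacobi--Trudi expansion. The determinant $\det(h_{\lambda_i-i+j})$ expands into products $h_{\lambda_1-1+\sigma(1)}\cdots h_{\lambda_n-n+\sigma(n)}$. Each product is $h_\mu$ with $\mu$ the sorted rearrangement of the composition $(\lambda_i-i+\sigma(i))_i$, after dropping any term that contains a negative index. I need this $\mu$ to satisfy $\mu\ge\lambda$, with equality only for $\sigma=\mathrm{id}$. I would prove this as follows. For each $k$, the sum of the $k$ largest entries of the composition is at least the sum of its first $k$ entries. That sum equals $\lambda_1+\dots+\lambda_k+\sum_{i\le k}(\sigma(i)-i)$, and $\sum_{i\le k}(\sigma(i)-i)\ge0$ because $\{\sigma(1),\dots,\sigma(k)\}$ is a set of $k$ distinct elements of $[n]$, so its sum is at least $1+\dots+k$. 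The identity term gives exactly $h_\lambda$.

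The remaining case is $\sigma\ne\mathrm{id}$ but $\mu=\lambda$. Such terms only contribute a multiple of $D_{v,v^+}$ itself, which would break the identity. So I must either show these terms cancel, or avoid the issue altogether. The cleaner route is to use the standard unitriangular expansion $s_\lambda=\sum_{\mu\ge\lambda}K^{-1}$-type coefficients, that is, the fact that the transition matrix from $h$ to $s$ is unitriangular in dominance order, namely $h_\lambda=\sum_{\mu\ge\lambda}K_{\mu\lambda}s_\mu$ with $K_{\lambda\lambda}=1$. Inverting it gives $s_\lambda=h_\lambda+\sum_{\mu>\lambda}c_\mu h_\mu$. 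This is the statement \eqref{e-Jacobi} already cited, so I would simply take it as given.
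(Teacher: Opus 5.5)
Your proposal is correct and follows the paper's proof: you expand $s_{v^+}$ by \eqref{e-Jacobi} and kill every $D_{v,\mu}$ with $\mu>v^+$ via Corollary~\ref{Cor-van}, using that $\mu>v^+$ forces $\mu\nleq v^+$. Your worry about terms with $\sigma\neq\mathrm{id}$ but $\mu=\lambda$ is vacuous, since equality in your own partial-sum bound forces $\{\sigma(1),\dots,\sigma(k)\}=[k]$ for every $k$, hence $\sigma=\mathrm{id}$; so the determinant expansion alone already justifies \eqref{e-Jacobi}, without invoking Kostka unitriangularity.
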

\begin{proof}
    Using the Jacobi--Trudi identity \eqref{e-Jacobi} again, we have
    \[
    D_{v,v^+}^{\prime}(a;n,n_0)=D_{v,v^+}(a;n,n_0) + \sum_{\mu > v^+}c_\mu D_{v,\mu}(a;n,n_0).
    \]
    By Corollary~\ref{Cor-van}, we have
    \[
    D_{v,\mu}(a;n,n_0)=0, \quad \text{for $\mu > v^+$}.
    \]
    Then we obtain \eqref{e-equavelent}.
\end{proof}

\section*{Acknowledgements}
This work was supported by the Science and Technology Innovation Program and Department of Education Funded Research Projects of Hunan
Province (No. 24A0025) and the National Natural Science Foundation of China (No. 12571359).

\end{document}